\numberwithin{equation}{section}
\newcommand{\R}{\mathbb{R}}
\newcommand{\Cm}{\mathbb{C}}
\newcommand{\intt}{\int\limits}
\renewcommand{\phi}{\varphi}
\newtheorem{Thm}{Theorem}[section]
\newtheorem{theorem}[Thm]{Theorem}
\newtheorem{lemma}[Thm]{Lemma}
\newtheorem{remark}[Thm]{Remark}
\newtheorem{definition}{Definition}
\begin{document}

  \title[A monotonicity theorem for subharmonic functions on manifolds]
  {A monotonicity theorem for subharmonic functions on manifolds}
  
  \author{Aleksei Kulikov}
  \address{Tel Aviv University, School of Mathematical Sciences, Tel Aviv, 
    69978, Israel,}
  \email{lyosha.kulikov@mail.ru}
  
  \author{Fabio Nicola}
  \address{Dipartimento di Scienze Matematiche, Politecnico di Torino, Corso 
    Duca degli Abruzzi 24, 10129 Torino, Italy.}
  \email{fabio.nicola@polito.it}
  
  \author{Joaquim Ortega-Cerdà}
  \address{Department de Matematiques i Inform\`atica, Universitat de 
  Barcelona, Barcelona, Spain and Centre de Recerca Matem\'atica, 
  Barcelona, Spain.}
  \email{jortega@ub.edu}
  
  \author{Paolo Tilli}
  \address{Dipartimento di Scienze Matematiche, Politecnico di Torino, Corso 
   Duca degli Abruzzi 24, 10129 Torino, Italy.}
  \email{paolo.tilli@polito.it}
 
 \thanks{Joaquim Ortega-Cerd\`{a} was supported in part by the Spanish 
Ministerio de Ciencia e Innovaci\'on, project PID2021-123405NB-I00. Aleksei 
Kulikov was supported by BSF Grant 2020019, ISF Grant 1288/21,
and by The Raymond and Beverly Sackler Post-Doctoral Scholarship.}
  
  \subjclass[2020]{Primary 30C40; Secondary 81R30, 49Q10,49R05}
  \keywords{Uncertainty principle, Wehrl entropy, Faber-Krahn inequality, shape
optimization}

  \begin{abstract} We provide a sharp monotonicity theorem about the
distribution of subharmonic functions on manifolds, which can be regarded as a
new, measure theoretic form of the uncertainty principle.  As an illustration of
the scope of this result, we deduce contractivity estimates for analytic
functions on the Riemann sphere, the complex plane and the Poincar\'e disc, with
a complete description of the extremal functions, hence providing a unified and
illuminating perspective of a number of results and conjectures on this subject,
in particular on the Wehrl entropy conjecture by Lieb and Solovej. In this
connection, we completely prove that conjecture for $SU(2)$, by showing that the
corresponding extremals are only the coherent states.
 Also, we show that the above (global) estimates admit a local counterpart and in all cases we characterize also the extremal subsets, among those of fixed assigned measure.
  \end{abstract}
  \maketitle
  \section{Main results and applications}
  In the recent works of the second and fourth authors \cite{NicTil1} in the
  Euclidean case and of the first author \cite{Kulikov} in the hyperbolic case,
  a new
  method was discovered for studying the distribution of analytic functions. In
  this paper we single out key properties required for this approach to work,
  in
  the process generalizing it to  wider classes  of functions and to new
  geometries, in particular the spherical geometry.

  To state the main result, we need to introduce some notation first. Let $M$
  be a
  smooth $n$-dimensional Riemannian manifold without boundary. We assume that
  it
  satisfies an isoperimetric inequality, that is for all open sets $A\subset M$
  with compact closure and smooth boundary we have
  \begin{equation}\label{isop}
    |\partial A|_m^2 \ge H(|A|_M),
  \end{equation}
  where $|\cdot|_m$ is the $n-1$ Hausdorff measure on $M$,
$|\cdot|_M$ is
  the
  $n$-dimensional volume on $M$ associated to the metric, and $H:(0, |M|_M)\to
  (0, +\infty)$ is a $C^1$ function (if $|M|_M$ is finite, we extend it to
  $H(|M|_M) = 0$).
  \begin{theorem}\label{main-diff}
    Let $M$ be an $n$-dimensional Riemannian manifold satisfying \eqref{isop} and let
    $u:M\to
    \R$ be a Morse function on $M$, $u\in C^2(M)$, such that for all $t\in \R$ 
the
    superlevel sets $u^{-1}([t, +\infty))$ are compact and $\Delta_M u \ge -c$,
    for
    some constant $c > 0$ where $\Delta_M$ is the
Laplace-Beltrami operator on
    $M$.
    Put $\mu(t) = |u^{-1}([t, +\infty))|_M$ and $t_0 = \sup_{p\in M} u(p)$. Then $\mu(t)$ is locally absolutely continuous and
    \begin{equation}\label{diff ineq}
      \mu'(t) \le -\,\frac{H(\mu(t))}{c\mu(t)}
    \end{equation}
    for almost all $t\in (-\infty, t_0)$.
  \end{theorem}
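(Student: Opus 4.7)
The plan is to derive~\eqref{diff ineq} by combining three ingredients: the coarea formula (to identify $\mu'$ and obtain the absolute continuity of $\mu$), the Cauchy--Schwarz inequality on level sets (to relate the area of $u^{-1}(t)$ to the integrals of $|\nabla u|$ and of $|\nabla u|^{-1}$), and the divergence theorem together with the bound $\Delta_M u\ge -c$ (to control the integral of $|\nabla u|$ over a level set by the volume of the corresponding superlevel set).

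First I would address the regularity of $\mu$. Since $u\in C^2$ is Lipschitz on compact subsets of $M$, the coarea formula combined with the identity $\mu(t_1)-\mu(t_2)=|\{t_1\le u<t_2\}|_M$ (valid for $t_1<t_2$) yields
\begin{equation*}
\mu(t_1)-\mu(t_2)=\int_{t_1}^{t_2}\phi(s)\,ds,\qquad \phi(s):=\int_{u^{-1}(s)}\frac{1}{|\nabla u|}\,d\mathcal{H}^{n-1}.
\end{equation*}
The Morse hypothesis makes the critical values of $u$ a discrete subset of $\R$, and a local computation in Morse coordinates (where $|\nabla u|$ vanishes linearly) shows that $\phi\in L^1_{\mathrm{loc}}$. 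Hence $\mu$ is locally absolutely continuous with $\mu'(t)=-\phi(t)$ almost everywhere.

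Next, I would work at a regular value $t\in(-\infty,t_0)$, which covers almost every such $t$. Then $u^{-1}(t)$ is a smooth compact hypersurface, coinciding with the boundary of the open set $\{u>t\}$, which has compact closure; the isoperimetric hypothesis~\eqref{isop} gives $|u^{-1}(t)|_m^2\ge H(\mu(t))$. The Cauchy--Schwarz inequality produces
\begin{equation*}
|u^{-1}(t)|_m^2=\left(\int_{u^{-1}(t)}d\mathcal{H}^{n-1}\right)^{\!2}\le\left(\int_{u^{-1}(t)}|\nabla u|\,d\mathcal{H}^{n-1}\right)\phi(t),
\end{equation*}
while the divergence theorem on $\Omega_t:=\{u\ge t\}$, whose outer unit normal along $u^{-1}(t)$ is $-\nabla u/|\nabla u|$, combined with $\Delta_M u\ge -c$, gives $\int_{u^{-1}(t)}|\nabla u|\,d\mathcal{H}^{n-1}=-\int_{\Omega_t}\Delta_M u\,dV\le c\,\mu(t)$. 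Chaining the three estimates yields $H(\mu(t))\le -c\,\mu(t)\,\mu'(t)$, which is~\eqref{diff ineq}.

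The step that I expect to require most care is the verification that $\phi\in L^1_{\mathrm{loc}}$: the coarea formula alone does not grant it, so one has to argue by hand near each critical value. The Morse structure keeps this manageable, as in suitable coordinates one is integrating the singular factor $|x|^{-1}$ against the $(n-1)$-dimensional volume of a quadric whose dependence on the level parameter is explicit; everything else in the proof is a direct coupling of~\eqref{isop} with two standard tools of calculus.
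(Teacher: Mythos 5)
Your proposal is correct and follows essentially the same route as the paper: the coarea formula to identify $\mu'$ and get local absolute continuity, Cauchy--Schwarz on the level set, the divergence theorem combined with $\Delta_M u\ge -c$, and the isoperimetric hypothesis \eqref{isop}. The only (minor) divergence is that the paper gets the integrability of $\phi(s)=\int_{u^{-1}(s)}|\nabla u|^{-1}\,d\mathcal{H}^{n-1}$ directly from the coarea formula --- since the Morse hypothesis gives $|\{\nabla u=0\}|_M=0$ and the superlevel sets have finite measure, $\int_{t_1}^{t_2}\phi(s)\,ds$ equals the finite quantity $\mu(t_1)-\mu(t_2)$, so $\phi\in L^1_{\mathrm{loc}}$ automatically and the by-hand analysis in Morse coordinates you anticipate is not needed.
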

 Roughly speaking, this result tells us that $u$ cannot be too concentrated in the measure theoretic sense, which can be regarded as a new form of the uncertainty principle.

  The assumption that $u$ is a Morse function, unlike every other one, is
  purely
  technical for this theorem to hold. For a general function $u$ satisfying all
  the other conditions, we can get an almost equivalent result. To state it, it is convenient
to define, for $t_1 < t_2<t_0$ and $\mu>0$,
   $D(t_1, t_2, \mu):=g(t_1)$, where $g(t)$ is the solution, on the interval $[t_1,t_2]$, of the
   (backward)
  differential equation
  \begin{equation}\label{diffeq}
    g'(t) = -\,\frac{H(g(t))}{cg(t)}
  \end{equation}
with initial condition $g(t_2) = \mu$, provided that such solution exists.
  \begin{theorem}\label{main-mon}
    Let $M$ be an $n$-dimensional Riemannian manifold satisfying \eqref{isop} and let
    $u:M\to \R$ be a function in $C^2(M)$ such that for all $t\in \R$ the sets
    $u^{-1}([t, +\infty))$ are compact and $\Delta_M u \ge -c$ for some
constant
    $c > 0$, where $\Delta_M$ is the Laplace-Beltrami operator
on $M$. Put $\mu(t) = |u^{-1}([t, +\infty))|_M$ and $t_0 = \sup_{p\in M} u(p)$.
    Then for all $t_1 < t_2 < t_0$ we have
\begin{equation}
\label{claim2}
D(t_1, t_2, \mu(t_2))\le \mu(t_1)
\end{equation}
  \end{theorem}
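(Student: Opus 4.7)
The plan is to reduce Theorem \ref{main-mon} to Theorem \ref{main-diff} via a Morse approximation of $u$, combined with a comparison principle for ODEs. Fix $t_1 < t_2 < t_0$.

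First, I would approximate $u$ by a sequence $u_k \in C^2(M)$ of Morse functions with (i) $u_k \to u$ uniformly on $M$; (ii) $\Delta_M u_k \ge -c_k$ for some $c_k \searrow c$; (iii) each $u_k$ still having compact superlevel sets. A natural construction is $u_k = u + \epsilon_k \psi$, with $\psi$ a fixed bounded $C^2(M)$ function with bounded Laplacian and $\epsilon_k \downarrow 0$ chosen (generically) so that $u_k$ is Morse; density of Morse functions in $C^2(M)$ guarantees such a choice can be made, the boundedness of $\psi$ preserves the compactness of superlevel sets, and one may take $c_k = c + \epsilon_k \|\Delta_M \psi\|_\infty$.

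Theorem \ref{main-diff} now applies to each $u_k$, giving that $\mu_k(t) := |u_k^{-1}([t, +\infty))|_M$ is locally absolutely continuous with $\mu_k'(t) \le -H(\mu_k(t))/(c_k \mu_k(t))$ a.e. Let $g_k$ be the solution on $[t_1, t_2]$ of the backward ODE $g_k' = -H(g_k)/(c_k g_k)$ with $g_k(t_2) = \mu_k(t_2)$; this solution exists for $k$ large by continuous dependence on parameters (and the existence assumption built into $D(t_1, t_2, \mu(t_2))$). Reversing time turns the differential inequality $\mu_k'(t) \le F_k(\mu_k(t))$, with $F_k(y) = -H(y)/(c_k y)$, into a supersolution statement for the forward ODE with right-hand side $-F_k$; since $H \in C^1$ makes $F_k$ locally Lipschitz, the standard comparison principle yields $\mu_k(t_1) \ge g_k(t_1)$.

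It remains to let $k \to \infty$. Uniform convergence $u_k \to u$ implies $\mu_k(t) \to \mu(t)$ at every continuity point $t$ of $\mu$, and such points are dense because $\mu$ is monotone. Continuous dependence of ODE solutions on the initial datum and on $c_k$ then gives $g_k(t_1) \to D(t_1, t_2, \mu(t_2))$, so $\mu(t_1) \ge D(t_1, t_2, \mu(t_2))$ whenever $t_1, t_2$ are continuity points of $\mu$. An approximation argument using the left-continuity of $\mu$ and the continuity of $D$ in its arguments extends the inequality to arbitrary $t_1 < t_2 < t_0$. The main technical obstacle is the Morse approximation when $M$ is non-compact: one must produce a global Morse perturbation preserving both the compactness of superlevel sets and the Laplacian lower bound. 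A secondary issue is controlling the limit at the (countably many) discontinuity points of $\mu$.
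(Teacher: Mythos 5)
Your proposal follows essentially the same route as the paper: prove the Morse case by combining Theorem \ref{main-diff} with a standard ODE comparison theorem, then pass to general $u$ by a Morse approximation in the strong $C^2$ topology and continuous dependence of the ODE solution on the data. Three points where the paper is more careful are worth noting. First, your explicit perturbation $u+\epsilon_k\psi$ with a \emph{fixed} $\psi$ need not be Morse for any choice of $\epsilon_k$ (take $\psi$ constant, for instance); the correct move is simply to invoke the density of Morse functions in the strong $C^2$ topology, as the paper does, which automatically yields $|u-u_n|<1/n$ and $|\Delta u-\Delta u_n|<1/n$ and hence preserves compactness of superlevel sets and the Laplacian bound up to $1/n$. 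Second, the paper does not take the existence of $D(t_1,t_2,\mu(t_2))$ as given: in the Morse case the comparison bound $g(t)\le\mu(t)$, together with the local boundedness of $\mu$ coming from compactness of the superlevel sets, is used to rule out blow-up of the backward solution, so existence on all of $[t_1,t_2]$ is part of the conclusion, whereas your argument assumes it (and your existence of the approximate solutions $g_k$ is then derived from it by openness of the flow domain, which is fine but only under that assumption). Third, the paper sidesteps the discontinuity points of $\mu$ entirely via the sandwich $A_n(t+\frac1n)\subset A(t)\subset A_n(t-\frac1n)$, which gives $\mu_n(t_2-\frac1n)\ge\mu(t_2)$ and $\mu_n(t_1+\frac1n)\le\mu(t_1)$ directly and is then combined with the monotonicity of $D$ in the initial datum; your density-of-continuity-points argument also works, using the left-continuity of $\mu$, but is more laborious.
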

 Part of the result is that the solution $D(t, t_2, \mu(t_2))$ exists for every $t<t_2$. This a consequence of the above a priori bound, which prevents blow-up in finite time in the (backward) Cauchy problem.
  If $u$ is a Morse function, then this theorem is a direct consequence of Theorem
  \ref{main-diff} and a basic comparison principle for first-order ODE. If the
  function
  $u$ is not Morse, then we can approximate it by Morse functions while
  preserving all the other assumptions and use the continuity of the solution
  to the
  differential equation on the initial conditions. For the reader's convenience we put the deduction of the Theorem \ref{main-mon} from the Theorem \ref{main-diff} in the Appendix.

  A version of this result was used in \cite{NicTil1} and \cite{Kulikov},
  the difference being that in these papers the authors worked with
weighted analytic functions of the form $f(z) =
g(z)e^{-\phi(z)}$, with $g$ holomorphic and $\phi$ having constant Laplacian,  for which we
have a lower bound of $\Delta \log |f(z)|$ instead of
  $\Delta |f(z)|$, which amount just to a simple change of variables. The
  advantages of Theorem~\ref{main-mon} are first of all that we can consider
  many
  more different manifolds than just Euclidean and hyperbolic spaces, in
  particular we can also work in the spherical geometry, but more generally on
  simply-connected two-dimensional manifolds of bounded curvature. Another
  advantage is that the functions that we work with are no longer analytic,
  thus
  vastly enlarging the domain of applicability of this theorem.

 As a consequence of the monotonicity result in Theorem \ref{main-mon} we will prove the following sharp functional inequality.
  \begin{theorem}\label{main-contr}
    Let $F:\R\to \R$ be a smooth increasing function 
    such that $\lim_{t\to-\infty}F(t)=0$
and 
   ${G:[0, F(t_0)]}\to\R$ be a continuous convex function with $G(0) = 0$, for some $t_0\in\R$.
Let $M$ be an
    $n$-dimensional Riemannian manifold satisfying \eqref{isop} and let $u:M\to
    \R$
    be a $C^2(M)$ function such that for all $t\in \R$ the sets 
$u^{-1}([t,
    +\infty))$ are compact and $\Delta_M u \ge -c$ for some constant $c > 0$,
    where
    $\Delta_M$ is the Laplace-Beltrami operator on $M$, with
    \begin{equation}\label{condition-contr}
      \int_M F(u(p))\, d{\rm Vol}(p) = 1.
    \end{equation}
    Let $\mu_0(t)> 0$ be a solution to the differential equation \eqref{diffeq}
    on
    $(-\infty, t_0)$, such that
    $$\int_{-\infty}^{t_0} F'(t) \mu_0(t)dt = 1$$
    and $\lim_{t\to t_0^-} \mu_0(t) = 0$.
    Then $u(p)\le t_0$ for all $p\in M$ and
    \begin{equation}\label{result-contr}
      \int_M G(F(u(p)))\, d{\rm Vol}(p)\le \int_{-\infty}^{t_0} G'(F(t))F'(t) \mu_0(t)dt.
    \end{equation}
    Moreover, if $G$ is not linear on $[0,F(t_0)]$, then equality in \eqref{result-contr}
    is possible only if either both integrals are $-\infty$ or $|u^{-1}([t,
    +\infty))|_M = \mu_0(t)$ for all $t<t_0$.

  \end{theorem}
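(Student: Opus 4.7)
My plan is to reduce \eqref{result-contr} to a one-dimensional comparison between the distribution function $\mu(t) = |u^{-1}([t,+\infty))|_M$ and the extremal profile $\mu_0(t)$. By the layer-cake identity (Fubini applied to $F(u)$ and to $G\circ F\circ u$), the normalization \eqref{condition-contr} rewrites as $\int F'(t)\mu(t)\,dt = 1$, matching the assumption on $\mu_0$, while the target \eqref{result-contr} reduces to
\[
\int_{-\infty}^{t_0} G'(F(t))F'(t)\bigl[\mu(t) - \mu_0(t)\bigr]\,dt \le 0.
\]

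First I would show $u\le t_0$ by contradiction. If $\sup u > t_0$, then continuity of $u$ and nonemptiness of $\{u > t_0\}$ give $\mu(t_0) > 0$, and Theorem \ref{main-mon} with $t_2 = t_0$ yields $\mu(t_1) \geq D(t_1, t_0, \mu(t_0))$ for every $t_1 < t_0$. The backward ODE solution from $\mu(t_0) > 0$ cannot cross $\mu_0$ (uniqueness in $\{g > 0\}$), and since $\mu_0(t_0^-)=0$ it lies strictly above $\mu_0$ on $(-\infty, t_0)$. Hence $\mu > \mu_0$ on that interval, so $\int F'\mu > \int F'\mu_0 = 1$, contradicting the normalization.

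The heart of the proof is a two-sided comparison of $\mu$ with $\mu_0$ obtained from Theorem \ref{main-mon}. Whenever $\mu(s) = \mu_0(s)$, the backward inequality $D(\cdot, s, \mu(s)) \leq \mu(\cdot)$ yields $\mu \geq \mu_0$ on $(-\infty, s]$; a forward counterpart $\mu \leq \mu_0$ on $[s, t_0)$ follows by contradiction, since any overshoot $\mu(r) > \mu_0(r)$ at $r > s$ would produce $\mu(s) > \mu_0(s)$ via backward monotonicity from $r$. These two facts imply that the contact set $I = \{t < t_0 : \mu(t)=\mu_0(t)\}$ is a closed interval $[t^*, t^{**}]$ (necessarily nonempty, since strict one-sided inequality everywhere would violate $\int F'(\mu - \mu_0) = 0$), with $\mu > \mu_0$ on $(-\infty, t^*)$ and $\mu < \mu_0$ on $(t^{**}, t_0)$. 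Subtracting the constant $G'(F(t^{**}))$ from the weight (legitimate since $\int F'(\mu-\mu_0)=0$) rewrites the target as
\[
\int_{-\infty}^{t_0} \bigl[G'(F(t)) - G'(F(t^{**}))\bigr]\, F'(t)\,(\mu(t) - \mu_0(t))\,dt,
\]
and a pointwise sign check (the bracket is $\le 0$ exactly where $\mu - \mu_0 \ge 0$, and vice versa, while the middle region contributes nothing) then gives the inequality.

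For the equality case with both integrals finite, each of the three regions must contribute zero, forcing $G'(F(\cdot)) \equiv G'(F(t^{**}))$ on both $(-\infty, t^*)$ and $(t^{**}, t_0)$; by monotonicity of $G'$ this constant value extends to all of $(0, F(t_0))$, making $G$ linear on $[0, F(t_0)]$ and contradicting the hypothesis, so $\mu \equiv \mu_0$ on $(-\infty, t_0)$. The main obstacle I anticipate is the rigorous passage from the one-sided (backward) statement of Theorem \ref{main-mon} to full bilateral comparison with $\mu_0$, together with the careful bookkeeping of strict versus non-strict inequalities required to pin down the equality case exactly under the hypothesis that $G$ is not linear on $[0, F(t_0)]$.
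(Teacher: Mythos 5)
Your proposal follows essentially the same route as the paper: the layer-cake reduction, the bilateral comparison of $\mu$ with $\mu_0$ via Theorem \ref{main-mon} (backward directly, forward by contraposition) yielding a single sign change of $\mu-\mu_0$, the subtraction of the constant $G'(F(t_1))$ justified by the equal normalizations, and the pointwise sign check together with the equality analysis via monotonicity of $G'$ --- this is precisely the paper's Lemma \ref{lemma} and its proof of the theorem. The only loose ends are that your contact set $\{\mu=\mu_0\}$ may actually be empty (since $\mu$ need not be continuous, $\mu-\mu_0$ can change sign by jumping across zero, though the crossing time and the sign decomposition survive), and that the case where both integrals equal $-\infty$, which the statement explicitly allows, still requires the short separate argument the paper supplies.
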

  The function $F$ corresponds to the change of variables, for example $F(t) =
  e^t$ if we want to consider $\log$-subharmonic functions, while the function
  $G$
  corresponds to what we want to integrate, for example $G(t) = t^p$ if we want
  to
  consider $L^p$-norms.

\begin{remark}\label{remark zero}
Since we want to apply our theorem to logarithms of analytic functions, which 
can be $0$ at some points, sometimes it is convenient for us to assume that 
$u:M\to [-\infty, \infty)$ is continuous and $C^2$ on $u^{-1}(\R)$. This case 
follows from the above theorem since $u^{-1}(\R)$ is still a manifold without 
boundary, $u^{-1}([t, +\infty))$ are compactly embedded into it and $F(-\infty) 
=G(0)=0$ so the integrals do not change.
\end{remark}

By applying Theorem \ref{main-contr} to some particular instances of
manifolds $M$ and functions $u$ it is possible to prove that in many occasions
the most concentrated (normalized) functions in a reproducing kernel Hilbert 
space are given by the normalized reproducing kernels. This is
particularly clear when the space consists of holomorphic functions and there is
a group acting on $M$ which is compatible with the reproducing 
kernel structure. If
we quantify the concentration of the functions in terms of the Wehrl
entropy, this is essentially the content of Section~\ref{Wehrl_conjecture}.
We prove a generalized version of the conjecture, that gives as a corollary
the hypercontractive embeddings among spaces.

There is also a local analogous problem, where we inquire
which is the domain of a given measure where a normalized function in a
reproducing kernel Hilbert space is mostly concentrated. Again the extremal
functions for such problems are reproducing kernels and the extremal domains
are the corresponding super-level sets. This is the content of Section~\ref{local}.\bigskip

Rupert  Frank \cite{Frank} has independently and simultaneously
obtained analogous results to those in Sections~\ref{Wehrl_conjecture} and
\ref{local} in the three classical geometric models:
sphere, Euclidean plane and hyperbolic disk.  We have chosen to present a
streamlined proof in a more general version that covers as a particular case the
classical results. Moreover, our approach allows us to identify the maximizers
in the local estimates; see Section~\ref{local}.

  \section{Proof of Theorem \ref{main-diff}}
 Since $u$ is a Morse function, we have $|\{p\in M: \nabla u(p)=0\}|_M=0$, which implies that $\mu(t)$ is locally absolutely continuous and the coarea formula holds in the following form
  $$\mu'(t) = -\int_{\partial A_t} |\nabla u|^{-1}d\mathcal{H}^{n-1}$$
  for almost all $t\in\mathbb{R}$, where $A_t = u^{-1}([t,+\infty))$, $|\nabla
u|$ stands for the lenght of $\nabla u$ in the tangent space and
$\mathcal{H}^{n-1}=|\cdot|_m$ is the $n-1$-dimensional Hausdorff measure on $M$
(cf. \cite{Fed}*{3.2.12 and 3.2.46}). By Sard's theorem, for almost all
$t$ we have $\nabla u\not=0$ where $u=t$, so that $\partial
  A_t =u^{-1}(\{t\})$ is a smooth submanifold, which is compact by the assumption that $u^{-1}([t, +\infty))$ is compact for all $t\in\R$.

  Next, we apply the Cauchy--Schwarz inequality on $\partial   A_t$:
  $$|\partial A_t|_m^2 = \left(\int_{\partial A_t} d\mathcal{H}^{n-1}\right)^2 \le
  \int_{\partial A_t} |\nabla u|^{-1} \,d\mathcal{H}^{n-1}\int_{\partial A_t} |\nabla u|\,d\mathcal{H}^{n-1}.$$

  Now, $\nabla u$ is orthogonal to $\partial
  A_t$ and pointing inside $A_t$.
 Thus, denoting by $\nu$ the unit outward normal to
  $\partial A_t$, we have $|\nabla u| = -\nabla u \cdot \nu$. Plugging this in
  and using Gauss--Green's theorem, we have
  $$\int_{\partial A_t} |\nabla u| \,d\mathcal{H}^{n-1} = -\int_{\partial A_t} \nabla u \cdot \nu \,d\mathcal{H}^{n-1}=
  -\int_{A_t} \Delta u \,d\textrm{Vol}\le c|A_t|_M = c\mu(t).$$

  By the isoperimetric inequality, we have $|\partial A_t|_m^2 \ge H(\mu(t))$.
  Combining everything and dividing by $c\mu(t)$ (note that here we used that
$t
  < t_0$, that is $\mu(t) > 0$), we get
  $$-\mu'(t) \ge \frac{H(\mu(t))}{c\mu(t)}.$$
  Multiplying this by $-1$ we get the desired result.

\begin{remark}\label{remloc}
It turns out that the claim of Theorem \ref{main-diff} is of local nature,
and the assumption that every superlevel sets of $u$ is compact can be weakened.
In fact, the previous proof yields (without changes) the following more general result:
if the
superlevel sets $\{ u\geq t\}$ are compact for  $t> \tau$ (for some 
$\tau<t_0$), and $\Delta _M u \geq -c$
in the open set where $u>\tau$, then $\mu(t)$ is locally absolutely continuous in
$(\tau,t_0)$, and  \eqref{diff ineq} holds true for a.e. $t\in (\tau,t_0)$.
\end{remark}

  \section{Proof of the Theorem \ref{main-contr}}
 The following preliminary result will play a crucial role in the following.
  \begin{lemma}\label{lemma}
 With the same notation and assumptions as in Theorem~\ref{main-contr}, let 
$\mu(t) = |u^{-1}([t, +\infty))|_M$ and suppose $\mu\not=\mu_0$ at some point, 
where $\mu_0(t)$ in understood to be extended by $0$ past $t_0$.
There exist $ t_1 < t_0$ such that $\mu(t) \ge \mu_0(t)$
  if
  $t \le t_1$ and $\mu(t) < \mu_0(t)$ if $t_1 < t< t_0$. In
  particular,
  $\mu(t) = 0$ for $t\ge t_0$.
 Moreover $\mu(t) > \mu_0(t)$
  if
  $t_1-t>0$ is large enough.
  \end{lemma}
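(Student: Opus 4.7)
The plan is to combine the a priori bound of Theorem~\ref{main-mon} with ODE comparison for \eqref{diffeq} to obtain a ``once $\mu$ drops below $\mu_0$, it stays below'' dichotomy, and then to pin down the unique crossing level using the two normalizations $\int_{-\infty}^{\infty} F'(t)\mu(t)\,dt = 1 = \int_{-\infty}^{t_0} F'(t)\mu_0(t)\,dt$, the first being the layer-cake rewriting of \eqref{condition-contr}.

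The key comparison is that if $\mu(s)\ge\mu_0(s)$ at some $s<t_0$, then $\mu(t)\ge\mu_0(t)$ for every $t\le s$. Indeed, Theorem~\ref{main-mon} gives $\mu(t)\ge D(t,s,\mu(s))$, and since the right-hand side of \eqref{diffeq} is locally Lipschitz in $g$ for $g>0$, the backward flow is monotone in the initial datum, so $D(t,s,\mu(s))\ge D(t,s,\mu_0(s))=\mu_0(t)$. Contrapositively, the set $\{s<t_0:\mu(s)<\mu_0(s)\}$ is upward closed in $(-\infty,t_0)$.

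Define $t_1:=\sup\{s<t_0:\mu(s)\ge\mu_0(s)\}$. If $\mu\ge\mu_0$ throughout $(-\infty,t_0)$, the two normalizations force $\mu=\mu_0$ a.e.\ on $(-\infty,t_0)$ and $\mu\equiv 0$ on $[t_0,\infty)$, hence $\mu\equiv\mu_0$ everywhere by left-continuity of the distribution function $\mu$ and continuity of $\mu_0$; this contradicts the hypothesis, so $t_1<t_0$. Likewise, $\mu<\mu_0$ throughout $(-\infty,t_0)$ would give $\int_{-\infty}^{\infty}F'\mu<1$, so $t_1>-\infty$. The dichotomy then yields $\mu\ge\mu_0$ on $(-\infty,t_1)$ and $\mu<\mu_0$ on $(t_1,t_0)$, and passing to the limit $s\uparrow t_1$ along points where $\mu\ge\mu_0$, left-continuity of $\mu$ together with continuity of $\mu_0$ gives $\mu(t_1)\ge\mu_0(t_1)$. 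The relation $\mu<\mu_0$ on $(t_1,t_0)$, combined with $\mu_0(t_0^-)=0$ and left-continuity of $\mu$, forces $\mu(t_0)=0$, whence $\mu\equiv 0$ on $[t_0,\infty)$ by monotonicity.

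For the last assertion, suppose for contradiction that $\mu(s_n)=\mu_0(s_n)$ along some sequence $s_n\to-\infty$ (the equality follows from $\mu\ge\mu_0$ on $(-\infty,t_1]$). Applying Theorem~\ref{main-mon} at $s_n$ with $t\in(s_n,t_1]$ gives $\mu(s_n)\ge D(s_n,t,\mu(t))$, and since $\mu_0(s_n)=D(s_n,t,\mu_0(t))$, monotonicity of the backward flow in the initial datum yields $\mu(t)\le\mu_0(t)$ on $[s_n,t_1]$. Together with the first step, $\mu\equiv\mu_0$ on $[s_n,t_1]$, and letting $n\to\infty$, $\mu\equiv\mu_0$ on $(-\infty,t_1]$. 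But then $\int_{-\infty}^{t_0} F'(\mu-\mu_0)\,dt=\int_{t_1}^{t_0} F'(\mu-\mu_0)\,dt<0$, contradicting equality of the normalizations. The most delicate point is this last step: Theorem~\ref{main-mon} is a backward-in-time bound, but here it is used to control $\mu(t)$ from \emph{above} at later times $t>s_n$; this reversal works because equality at $s_n$ upgrades the one-sided comparison to a two-sided one via the invertibility and monotonicity of the backward ODE flow, but the direction of the comparison must be tracked with care.
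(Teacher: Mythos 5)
Your proof is correct and follows essentially the same route as the paper's: the layer-cake identity $\int F'\mu = \int F'\mu_0$ forces the existence of points where $\mu>\mu_0$ and where $\mu<\mu_0$, and Theorem~\ref{main-mon} combined with monotonicity of the backward flow in the initial datum propagates these inequalities to produce the single crossing level $t_1$. The only real deviation is in the final assertion, which the paper gets in one line by propagating the strict inequality $\mu(t_2)>\mu_0(t_2)$ backward (strictly, by uniqueness of the ODE) from a single point $t_2$, whereas you argue by contradiction with a forward comparison from equality points $s_n\to-\infty$; both arguments are valid.
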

Note that this lemma already implies that $u(p)\le t_0$ for all $p\in M$.
  \begin{proof}
The condition
  \eqref{condition-contr} is equivalent to
  $$\int_{-\infty}^\infty F'(t)\mu(t)dt = 1.$$
  Hence
  $$\int_{-\infty}^\infty F'(t)\mu(t)dt = \int_{-\infty}^{\infty}
  F'(t)\mu_0(t)dt.$$
Since $F'(t)> 0$ and $\mu\not=\mu_0$, there should be
  $t_2$ and $t_3$ such that $\mu(t_2) > \mu_0(t_2)$ and $\mu(t_3)<
  \mu_0(t_3)$; in particular $t_3<t_0$, because $\mu_0(t)=0$ for $t\geq t_0$. By Theorem \ref{main-mon} for $t\le t_2$ we have $\mu(t) >
  \mu_0(t)$ while for $t_3\le t<t_0$ we have $\mu(t)< \mu_0(t)$.  We denote by $t_1$ the
  infimum of
  admissible $t_3$'s. The conclusion is then clear.
    \end{proof}

\begin{proof}[Proof of Theorem \ref{main-contr}]
As in Lemma \ref{lemma} we set $\mu(t) = |u^{-1}([t, +\infty))|_M$ for $t\in\R$.

The left-hand side of \eqref{result-contr} is equal to
  $$\int_{-\infty}^{t_0} G'(F(t))F'(t)\mu(t)dt,$$
 because $\mu(t)=0$ for $t\geq t_0$ by Lemma \ref{lemma}.

Observe that this latter integral could be $-\infty$ but not $+\infty$. Indeed, 
the positive part of $G'(F(t))$ is bounded outside of the vicinity of $t_0$ and 
near $t_0$ we have that $\mu(t)$ is bounded while 
$\int_{-\infty}^{t_0} G'(F(t))F'(t)\, dt = G(F(t_0))<\infty$ is 
convergent. The same can be said for the right-hand side of 
\eqref{result-contr}.  Moreover it is clear that we have an equality in 
\eqref{result-contr} if $\mu(t)=\mu_0(t)$ for $t<t_0$.

Now, if the right-hand side of \eqref{result-contr} is $-\infty$ then $G'(x)<0$ for $x>0$ small enough and
$$\int_{-\infty}^{\overline{t}} G'(F(t))F'(t)\mu_0(t)dt=-\infty,$$
if $t_0-\overline{t}>0$ is large enough, because $G'$ is bounded below on the compact subintervals of $(0,F(t_0)]$. By Lemma \ref{lemma} the same holds with $\mu_0$ replaced by $\mu$, which implies that the left-hand side
of \eqref{result-contr} is $-\infty$ as well.

Suppose now that both sides of \eqref{result-contr} are finite. Let $t_1$ be as in Lemma \ref{lemma}. We must have
  \begin{align*}\int_{-\infty}^{t_0} G'(F(t))F'(t)(\mu_0(t)-\mu(t))dt =
    \int_{-\infty}^{t_1}(G'(F(t))-G'(F(t_1)))F'(t)(\mu_0(t)-\mu(t))dt \\
    +
    \int_{t_1}^{t_0}(G'(F(t))-G'(F(t_1)))F'(t)(\mu_0(t)-\mu(t))dt \ge 0
  \end{align*}
  where in the last step we used Lemma \ref{lemma} and that $G'$ is non-decreasing. To be precise, $G'$ is in fact defined  only almost everywhere, but the above formulas hold true if  $G'$ is understood e.g. as the left derivative, so that it is an everywhere defined non-decreasing function on $(0,+\infty)$ (and the pointwise value $G'(F(t_1))$ makes sense).

   If we have equality in the latter estimate, we have $$(G'(F(t))-G'(F(t_1)))(\mu_0(t)-\mu(t))=0$$ for almost every $t<t_1$ and for almost every $t\in (t_1,t_0)$, hence for almost every $t<t_0$. Since $\mu(t)<\mu_0(t)$ if $t\in (t_1,t_0)$ we have $G'(F(t))=G'(F(t_1))$ for almost every $t\in (t_1,t_0)$. On the other hand by Lemma \ref{lemma} $\mu_0(t)<\mu(t)$ for $t<\overline{t}$, for some $\overline{t}<t_0$. Hence $G'(F(t))=G'(F(t_1))$ for almost every $t<\overline{t}$. Since $G'(F(t))-G'(F(t_1))$ is non-positive and non-decreasing for $t<t_1$ we deduce that $G'(F(t))=G'(F(t_1))$ for every $t<t_1$. Summing up, $G'(F(t))=G'(F(t_1))$ for almost every $t<t_0$ (in fact, for every $t<t_0$) and therefore $G(t)$ is affine on $(F(-\infty), F(t_0))=(0,F(t_0))$, and therefore linear on $[0,F(t_0)]$, because $G$ is continuous and $G(0)=0$.
 \end{proof}

  \section{Applications, the generalized Wehrl
conjecture}\label{Wehrl_conjecture}

  In \cite{Wehrl}, Wehrl conjectured that among all Glauber states, the
  coherent states minimize the Wehrl entropy. To be more precise, we recall the
  basic terminology.

  We are given a locally compact group $G$ and a unitary representation $T$ of
  $G$ on a Hilbert space $\mathcal H$. We fix a vector $\psi \in \mathcal H$
  and denote by $H\subset G$ the subgroup that leaves $\psi$ invariant by the
  action $T$ on elements of $H$ up to a unimodular factor, i.e., $T[h](\psi) =
  e^{i\theta_h} \psi$ for all $h\in H$.  Let $X = G/H$. In many instances, the
  Haar measure on $G$ induces a measure $\mu$ on $X$ that is invariant under
  the action of $G$. Then for any coset $x\in X$
  we take a representative $g(x)$ and define the state
  $v_x = T[g(x)] (\psi)$. This vector is well-defined up to a unimodular
  factor that may change with the representative $g(x)$ that has been chosen.
  These vectors are the coherent states. For every $u\in \mathcal H$ and every
  $x\in X$ we may form $u(x) = \langle u, v_x\rangle$. In this way, we may
think
  of $\mathcal H$ as a reproducing kernel Hilbert space of functions over $X$.
  We have that
  $\langle v_x,v_y\rangle = K(x,y)$ is the reproducing kernel for $\mathcal H$,
  i.e.,   for all $v\in \mathcal H$, $v(x) = \int_X K(x,y) v(y)\, d\mu(y)$.

  Given a vector $v\in\mathcal H$ of norm one we define the Wehrl entropy as
  \[
  \int_{X} -|v(x)|^2 \log |v(x)|^2 \, d\mu(x).
  \]
  The conjecture is that this is minimized for the coherent states. Namely,
  the conjecture postulates that among the functions with unit norm the
  reproducing kernels are the most concentrated. Sometimes a more general
  conjecture is formulated, replacing the function $f(x)= x\log(1/x)$ in the
  Wehrl entropy definition by any other concave function $f:[0,1]\to \mathbb R$.

  This  conjecture was originally formulated by Wehrl  in \cite{Wehrl} for Glauber
  states. In this   original setting $\mathcal H$ is the Fock space of entire
  functions such that   $\int_{\mathbb C} |f(z)|^2 e^{-|z|^2} dA(z) <\infty$
  and $X = \mathbb C$ and $G$ is the Heisenberg group. It   was proved by Lieb
  that the coherent states are minimizers in \cite{Lieb}. Later on, Carlen in
  \cite{Carlen} found a new proof that moreover confirmed that these are the
  unique minimizers.

  In \cite{Lieb} Lieb extended the conjecture for the Bloch states. In this
  case, the group is $SU(2)$ and $\mathcal H$ is a space of holomorphic
  polynomials of degree up to $j$ endowed with the Fubini-Study metric. The
  coherent states associated are the corresponding reproducing kernels, see
  Subsection~\ref{Bloch} for details. Thirty-six years later, in
  \cite{LiebSolo1} Lieb and Solovej proved that the reproducing kernels are
  minimizers for the Wehrl entropy. The fact that these are the only minimizers
  remained open. They expect that a similar result should hold for any
  semi-simple Lie group.

  In \cite{LiebSolo3} they formulated the analogous problem for the group
  $SU(1,1)$ where the reproducing kernel Hilbert space is the Bergman space and
  $X$ is the unit disk and proved some partial cases. The full conjecture in
  this case was proved in \cite{Kulikov}. We will see now how all these cases
  and possibly many other instances of the Wehrl conjecture follow from our
  scheme. As a bonus, we will prove the uniqueness of the minimizers, thus setting
  the last piece of the Lieb conjecture for $SU(2)$.

  \subsection{Bloch coherent states}\label{Bloch}
  In \cite{LiebSolo1} Lieb and Solovej proved the generalized Wehrl conjecture
  that
  states that the Wehrl entropy is minimized at the coherent states in the
  Hilbert spaces of the irreducible representations of $SU(2)$. They did not
  prove that the coherent states alone minimize the entropy. In
  \cite{LiebSolo2} they
  extended their results to symmetric $SU(N)$ coherent states.

  To define the space of functions that we will consider, we first introduce
  the
  spherical measure on $\Cm$, which corresponds to
  the metric inherited from the Euclidean metric restricted to the sphere of
  radius $\frac{1}{2\sqrt{\pi}}$ transported to $\Cm$ by the stereographical
  projection. Namely, on $\Cm\ni z = x + iy$ we consider the Riemannian metric $\pi^{-1}(1+|z|^2)^{-2}(dx^2+dy^2)$ and the corresponding measure
  $$dm(z) = \frac{1}{(1+|z|^2)^2} \frac{dxdy}{\pi}.$$
  We will also sometimes denote the spherical measure of the set $A$ by $|A|_M=
  m(A)$.
  \begin{definition} Let $j\in \mathbb N $. We define
    $\mathcal P_j$ as the finite
    dimensional space of polynomials:
    \[
    z \mapsto \sum_{k= 0}^{j} c_k z^k,
    \]
    with inner product:
    \[
    \langle f, g\rangle = (j+1) \int_{\mathbb C} \frac{f(z) \overline
      g(z)}{(1+|z|^2)^j}
    dm(z)
    \]
    and reproducing kernel
    \[
    K_j(z,w) =  (1+ z\overline w)^{j}.
    \]
    The functions $K_j(\cdot,w)$ are the coherent states.
  \end{definition}

For each $p> 0$  the
$p$-(quasi-)norm of $f\in \mathcal P_j$ is defined as
  \[
  \|f\|_{\mathcal P_j,p}^p:= (pj/2+1) \int_{\mathbb C}
  \left|\frac{f(z)}{(1+|z|^2)^{j/2}}\right|^p\, dm(z).
  \]
  The factor $(pj/2+1)$ is introduced to guarantee that $\|1\|_{\mathcal P_j,
    p}
  = 1$. Thus, the subharmonicity of $|f|^p$ and integration in polar coordinates yield $|f(0)| \le \|f\|_{\mathcal P_j,
    p}$.
  There is an invariance of the space $\mathcal P_j$ under a subgroup of the
  Möbius transformations
  that preserve the spherical metric, i.e., for any $\alpha, \beta \in \Cm$
  such that $|\alpha|^2+|\beta|^2 = 1$ the map
  $T_{\alpha, \beta } f = ( \beta z + \bar\alpha )^j f\left(\frac{\alpha z
    -\bar \beta}{\beta z + \bar\alpha }\right)$ is an isometry of $\mathcal
  P_j$
  for all $p$. This is, in fact, the unitary representation of $SU(2)$ on
  $\mathcal P_j$.

  This entails that for all $f\in \mathcal P_j$ and all $p> 0$ we have:
  \begin{equation}\label{pointwise}
    \sup_{z\in \Cm} \frac{|f(z)|}{(1+|z|^2)^{j/2}} \le
    \|f\|_{\mathcal P_j,p}.
  \end{equation}

  Then the Conjecture 3.5 of Bodmann in \cite{Bodmann} is that
  \begin{equation}\label{contractive}
    \|f\|_{\mathcal
      P_j,q} \le \|f\|_{\mathcal P_j,p}
  \end{equation}
  when $1\le p \le q$ and equality is achieved if and only if $f$ is a multiple
  of
  the
  reproducing kernel.
  The generalized Wehrl theorem in this context is that for any $f\in \mathcal
  P_j$ normalized such that $\|f\|_{\mathcal P_j,2} = 1$, it holds:
  \[
  S_j(|f|^2) := -(j+1) \int_{\mathbb C} \frac{|f(z)|^2}{(1+|z|^2)^j}\ln
  \frac{|f(z)|^2}{(1+|z|^2)^j} dm(z) \ge \frac{j}{j+1}.
  \]
  This inequality follows from \eqref{contractive} observing
  that $\frac{\partial \|f\|_p}{\partial p}(2) \le 0$ or directly from the theorem below.

  Our aim is to prove:
  \begin{theorem}\label{Bergman}
    Let $G:[0,1]\to \R$ be a continuous convex function such that $G(0) =
    0$, $j\in\mathbb{N}$, $p> 0$.
    Then the maximum value of
    \begin{equation}\label{bergmanineq}
      \intt_\Cm G\left(\frac{|f(z)|^p}{(1+|z|^2)^{pj/2}}\right)dm(z)
    \end{equation}
   subject to the condition that $f\in
    \mathcal P_j$ and $||f||_{\mathcal P_j,p}=1$,  is attained for $f(z) = ( \beta z + \bar\alpha )^j$, for any $\alpha,\beta\in\mathbb{C}$, $|\alpha|^2+|\beta|^2=1$. If $G$ is not linear on $[0,1]$, then these are the only maximizers.
  \end{theorem}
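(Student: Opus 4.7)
The plan is to apply Theorem \ref{main-contr} to $u(z) = \log\bigl[|f(z)|^p/(1+|z|^2)^{pj/2}\bigr]$, viewed on the Riemann sphere (i.e.\ on $\Cm$ with the spherical metric) and extended to $-\infty$ at the zeros of $f$, so that Remark \ref{remark zero} applies. I would first verify the hypotheses. Because the metric is conformally flat, $\Delta_M = \pi(1+|z|^2)^2\,\Delta_{\mathrm{Eucl}}$; combined with $\Delta_{\mathrm{Eucl}}\log|f| = 0$ away from zeros and $\Delta_{\mathrm{Eucl}}\log(1+|z|^2) = 4/(1+|z|^2)^2$, this yields $\Delta_M u = -2\pi pj$, so one takes $c = 2\pi pj$. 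The pointwise bound \eqref{pointwise} forces $u \le 0$. The total spherical area is $1$, and the standard isoperimetric inequality on $S^2$ reads $|\partial A|_m^2 \ge 4\pi|A|_M(1-|A|_M)$, so $H(A) = 4\pi A(1-A)$.

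Next, I would choose $F(t) = (pj/2+1)e^t$, so that $\int_M F(u)\,dm = (pj/2+1)\int_\Cm e^u\,dm = 1$ is exactly the condition $\|f\|_{\mathcal{P}_j,p} = 1$. The ODE \eqref{diffeq} becomes $g' = -2(1-g)/(pj)$, with solution vanishing at $t_0$ given by $\mu_0(t) = 1 - e^{2(t-t_0)/(pj)}$; the normalization $\int F'\mu_0\,dt = 1$ forces $t_0 = 0$, giving $\mu_0(t) = 1 - e^{2t/(pj)}$. I would then apply Theorem \ref{main-contr} with $\tilde G(x) = G(x/(pj/2+1))$, which is continuous and convex on $[0, pj/2+1]$ with $\tilde G(0) = 0$, engineered so that $\tilde G(F(u)) = G(e^u)$ reproduces the integrand of \eqref{bergmanineq}. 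A short calculation gives $\tilde G'(F(t))F'(t) = G'(e^t)\,e^t$, so Theorem \ref{main-contr} yields
$$\int_\Cm G\Bigl(\frac{|f(z)|^p}{(1+|z|^2)^{pj/2}}\Bigr)\,dm(z) \;\le\; \int_{-\infty}^{0} G'(e^t)\,e^t\,(1-e^{2t/(pj)})\,dt.$$

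To recognize the right-hand side as the functional evaluated at a coherent state, I would integrate by parts (using $G(0)=0$ and $\mu_0(0)=0$) to rewrite it as $\frac{2}{pj}\int_{-\infty}^0 G(e^t)\,e^{2t/(pj)}\,dt$, then change variables $v = e^{2t/(pj)}$ to get $\int_0^1 G(v^{pj/2})\,dv$, and a further substitution $v = 1/(1+r^2)$ in polar coordinates identifies this with $\int_\Cm G((1+|z|^2)^{-pj/2})\,dm(z)$, the value at $f_0\equiv 1$. Finally, the $SU(2)$-isometry of the integrand, combined with the identity $1+|T_{\alpha,\beta}(z)|^2 = (1+|z|^2)/|\beta z+\bar\alpha|^2$ and the Möbius-invariance of $dm$, shows that the value at $(\beta z+\bar\alpha)^j$ agrees with the value at $f_0$, proving maximality.

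For the uniqueness, assume $G$ is not linear (equivalently $\tilde G$ is not linear). Since $G$ is bounded on $[0,1]$, both integrals in \eqref{result-contr} are finite, so the equality case of Theorem \ref{main-contr} forces $\mu(t) = \mu_0(t) = 1 - e^{2t/(pj)}$, i.e.\ $|\{u<t\}|_M = e^{2t/(pj)}$ for $t<0$. As $t\to-\infty$, $\{u<t\}$ concentrates near the zeros of $f$ on the Riemann sphere (the finite zeros of $f$, plus a zero at $\infty$ of multiplicity $j-\deg f$ if $\deg f<j$). Near a zero $\zeta$ of multiplicity $m_\zeta$ one has $u \sim p m_\zeta \log d(\cdot,\zeta) + O(1)$, so the local contribution to the measure is $\asymp e^{2t/(pm_\zeta)}$; since $\sum_\zeta m_\zeta = j$ and $e^{2t/(pm)} = o(e^{2t/(pj)})$ for $m<j$ as $t\to-\infty$, matching the total to $e^{2t/(pj)}$ forces one zero of multiplicity exactly $j$. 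Hence $f$ takes the form $c(z-\zeta)^j$ for some finite $\zeta$, or $f\equiv c$ (zero at infinity); matching the leading constant pins $|c|$ to the coherent-state normalization, so $f$ is a unimodular multiple of $(\beta z+\bar\alpha)^j$. The main obstacle I anticipate is to carry out this concentration analysis cleanly, in particular to compute the asymptotics of $|\{u<t\}|_M$ as $t\to-\infty$ with the correct multiplicative constants, so as to identify both the form of $f$ and the precise normalization.
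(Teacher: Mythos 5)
Your proposal is correct, and the existence/maximality half is essentially the paper's argument up to a harmless reparametrization: the paper takes $u=\log\bigl(|f(z)|/(1+|z|^2)^{j/2}\bigr)$ with $F(t)=(pj/2+1)e^{pt}$ and $c=2\pi j$, where you absorb the power $p$ into $u$; the resulting $\mu_0$ is the same, and where you identify the right-hand side of \eqref{result-contr} by integration by parts and two changes of variables, the paper simply observes that $\mu_0(t)=1-e^{2t/j}$ is exactly the distribution function of $\log(1+|z|^2)^{-j/2}$, so $f\equiv 1$ (hence every $(\beta z+\bar\alpha)^j$, by the $SU(2)$ invariance) realizes equality. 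The genuine divergence is in the uniqueness step. You extract from $\mu=\mu_0$ the asymptotics of $|\{u<t\}|_M$ as $t\to-\infty$ and localize near the zero divisor of $f$ on the sphere to force a single zero of multiplicity $j$; this is a sound plan (the contribution near a zero of multiplicity $m$ is indeed $\asymp e^{2t/(pm)}$, and $\sum m_\zeta=j$ closes the argument), and the normalization worry you flag at the end is actually moot, since once $f=c(z-\zeta)^j$ is known it is automatically proportional to a reproducing kernel and $\|f\|_{\mathcal P_j,p}=1$ fixes $|c|$. The paper instead argues softly: $\mu=\mu_0>0$ on $(-\infty,0)$ forces $\sup u=0$; by compactness of the sphere and the transitive isometric action one may assume the supremum is attained at $z=0$, so $|f(0)|=1$; and the sub-mean-value inequality $|f(0)|\le\|f\|_{\mathcal P_j,p}=1$ (cf.\ \eqref{pointwise}) is strict unless $f$ is constant. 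The paper's route avoids the concentration analysis you anticipate as the main obstacle, at the cost of using the group action and the attainment of the supremum; yours yields more structural information (the location and multiplicity of the zero) and does not need to normalize the maximum point, but requires the quantitative local estimates to be written out carefully, including at the point at infinity.
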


The first part of the statement corresponds to Theorem~2.1 in \cite{LiebSolo1}. The uniqueness of the maximizers is new. We observe that the expression in \eqref{bergmanineq} is always finite, since $m(\mathbb{C})=1$ is finite.

  \begin{proof}
    Take $M$ to be the sphere in $\mathbb R^3$ of radius
    $\frac{1}{2\sqrt{\pi}}$ with the Riemannian metric
    inherited from $\mathbb R^3$.
    Paul Levy's isoperimetric inequality for the sphere (see e.g. \cite{Oss}) says that for any open set
    $A\subset M$ with smooth boundary we have
    $$|\partial A|_{\mathcal{H}^1}^2 \ge 4\pi |A|_M - 4\pi|A|_M^2.$$
    Thus, we have \eqref{isop} with $H(x) = 4\pi x(1-x)$.
   For any polynomial $f\in \mathcal P_j$, take $u = \log\left(
    \frac{|f(z)|}{(1+|z|^2)^{j/2}}\right)$, pull it back to the sphere via the stereographic projection and extend to the North Pole $N$ by continuity $u(N) = \log(|c_j|)$, where $c_j$ is the coefficient of $z^j$ in $f(z)$. We will apply Theorem~\ref{main-contr} (and Remark \ref{remark zero}) to the function $u$ and $M$.

    The spherical Laplacian in
    stereographic
    coordinates is $\Delta_M = \pi (1+|z|^2)^2 \Delta_e$, where $\Delta_e$ is
    the
    ordinary Euclidean Laplacian, thus
    $$\Delta_M u = \pi (1+|z|^2)^2 \Delta_e \log (|f|) -  \pi (1+|z|^2)^2
    \Delta_e
    \log (1+|z|^2)^{j/2}\ge -2\pi j $$ and $F(t) = (pj/2+1)e^{pt}$.  We assume
    that the
    polynomial is normalized, i.e.,
    $$
    1=\|f\|_{\mathcal P_j, p}^p = \int_{\mathbb C}
    F(u(z)) dm(z).$$
    In order to apply Theorem~\ref{main-contr}, we  identify $\mu_0$. The
    function
    $\mu_0(t)$  is the solution to
    $$ g'(t) = 4\pi\frac{g(t) -g^2(t)}{-\pi 2jg(t)} = \frac{g(t)-1}{j/2},
    $$
    with the normalization $$(pj/2+1)\int_{-\infty}^{t_0} p e^{p t} \mu_0
    (t)\, dt = 1$$ and $\lim_{t\to t_0^-}\mu_0(t) = 0$.
    The solution is attained when $t_0= 0$ and  $\mu_0(t) = 1 - e^{2t/j}$ when
    $t\in
    (-\infty, 0)$. This is exactly $$m\Big(\Big\{z\in\mathbb{C}: \log \frac{1}{(1+|z|^2)^{j/2}} > t\Big\}\Big),$$
    thus
    $f = 1$ attains the maximum. Any other coherent state $T_{\alpha, \beta} 1
    = (\beta z + \bar \alpha)^j$,
    with $|\alpha|^2+|\beta|^2 = 1$ has the same distribution function, thus it
    will also attain the maximum. Let us check that they are the only
    maximizers. Indeed, if $f\in \mathcal P_j$ is a maximizer with
    $\|f\|_{\mathcal P_j, p} = 1$ we may assume (after an application of
    $T_{\alpha,\beta}$) that $\sup_{z\in \mathbb C}
    \frac{|f(z)|}{(1+|z|^2)^{j/2}}$ is attained at $z = 0$ (note that it must be attained somewhere since the sphere is compact). Subharmonicity
     implies that $|f(0)| \le 1$ and the equality is attained only when
    $f$ is constant, which can be seen by integrating in polar coordinates and applying subharmonicity to the function $|f(z)|^p$ on each circle $\{|z|=r\}$ and noting that subharmonicity is strict for large enough $r$ unless $f$ is constant. On the other
    hand by Theorem~\ref{main-contr} if $f$ is a maximizer then $|u^{-1}([t,+\infty))|_M
    = \mu_0(t)> 0$ if $t < 0$. Thus $\sup u = 0$, i.e., $\sup_{z\in\mathbb{C}}
    \frac{|f(z)|}{(1+|z|^2)^{j/2}} = 1$.
  \end{proof}
  \subsection{Glauber coherent states}
For $p>0$, $\alpha>0$, we consider the Bargmann-Fock space of
  entire functions $f$ of one complex variable $z=x+iy$ with $p$-(quasi-)norm
  \[
  \frac{p\alpha}{\pi}\int_{\mathbb C} |f(z) e^{-\alpha |z|^2}|^p dA(z) < \infty,
  \]
 with $dA(z)=dx dy$. For $p=2$ we have a reproducing kernel Hilbert space and the coherent states are given by $e^{ \alpha  \bar a z -\alpha |a|^2/2}$, with $a\in\mathbb{C}$, see e.g. \cite{Zhu}.

  The
  result that follows from Theorem~\ref{main-contr} is:
  \begin{theorem}
    Let $G:[0,1]\to \R$ be a convex function such that
    $G(0) = 0$. Let $\alpha>0$, $p>0$.
    Then the supremum of the functional
    \begin{equation}
      \int_\Cm G\left(|f(z)e^{-\alpha |z|^2/2}|^p\right)dA(z)
    \end{equation}
    subject to the condition
    that $f\in
    \mathcal H(\mathbb C)$ and
    \begin{equation}\label{constraint}
    \frac{p\alpha}{\pi}\int_{\mathbb C} |f(z) e^{-\alpha |z|^2/2}|^p dA(z)= 1
    \end{equation}
    is attained at $f(z) = e^{\alpha \bar a z-\alpha |a|^2/2}$ for any $a\in\mathbb{C}$.
    If $G$ is not linear on $[0,1]$, and this supremum is finite (i.e. $>-\infty$), then these are the only maximizers, up to a unimodular factor.
  \end{theorem}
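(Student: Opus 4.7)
My plan is to apply Theorem \ref{main-contr} with $M = \mathbb{C} \cong \mathbb{R}^2$ equipped with the standard Euclidean metric, so that the volume form is $dA$ and the isoperimetric inequality \eqref{isop} holds with $H(x) = 4\pi x$. I would set $u(z) := \log|f(z) e^{-\alpha|z|^2/2}|$, extended by $-\infty$ on the zero set of $f$ (invoking Remark \ref{remark zero}). Since $\log|f|$ is subharmonic wherever $f\neq 0$ and $\Delta(-\alpha|z|^2/2) = -2\alpha$, one has $\Delta u \geq -2\alpha$, so the hypotheses of Theorem \ref{main-contr} are satisfied with $c = 2\alpha$. Compactness of the superlevel sets of $u$ is guaranteed by the standard decay $|f(z)|e^{-\alpha|z|^2/2} \to 0$ as $|z|\to\infty$ for $f$ in the weighted Fock space. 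Choosing $F(t) := (p\alpha/\pi)e^{pt}$ converts the constraint \eqref{constraint} into $\int_M F(u)\,dA = 1$.

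Next I would solve the ODE \eqref{diffeq}: with $H(x) = 4\pi x$ and $c = 2\alpha$ it reduces to $\mu_0'(t) = -2\pi/\alpha$, whose solutions on $(-\infty, t_0)$ are the affine functions $\mu_0(t) = (2\pi/\alpha)(t_0 - t)$; the value of $t_0$ is then fixed by the normalization $\int_{-\infty}^{t_0} F'(t)\mu_0(t)\,dt = 1$ through an elementary integration. To identify the extremal, I would verify that a (suitably normalized) coherent state $f_a(z) = e^{\alpha\bar a z - \alpha|a|^2/2}$ produces $u_a(z) = -\alpha|z-a|^2/2 + \mathrm{const}$, whose superlevel sets are Euclidean disks of exactly the areas prescribed by $\mu_0$. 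Theorem \ref{main-contr} then delivers the asserted upper bound, attained by every coherent state.

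For uniqueness I would follow the blueprint of the proof of Theorem \ref{Bergman}. The Heisenberg translation $T_a f(z) := e^{\alpha\bar a z - \alpha|a|^2/2}f(z-a)$ is an isometry of the $p$-(quasi-)norm and preserves the distribution of $|f e^{-\alpha|z|^2/2}|$. Since $f$ lies in the Fock space, the product $|f(z)|e^{-\alpha|z|^2/2}$ decays at infinity and hence attains its supremum at some $z_0 \in \mathbb{C}$; after applying $T_{-z_0}$, I may assume this supremum sits at the origin. The equality clause of Theorem \ref{main-contr} then forces the superlevel sets of $u$ to coincide with those of $u_0$, pinning in particular $|f(0)| = e^{t_0}$. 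Invoking the sub-mean-value inequality for $|f|^p$ on the circles $\{|z|=r\}$, multiplying by $re^{-p\alpha r^2/2}$ and integrating in $r$, the constraint yields equality in sub-mean-value at a.e.\ $r$; by the maximum modulus principle $f$ must then be constant, and reversing $T_{-z_0}$ identifies $f$ as a coherent state (up to a unimodular factor).

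The main obstacle is the uniqueness step, specifically the transition from ``$u$ has the same distribution as a coherent state'' to ``$f$ literally is that coherent state''. The Heisenberg invariance lets one place the supremum at the origin, and the rigidity of sub-mean-value for holomorphic $|f|^p$ then forces $f$ to be constant. Attainment of the supremum on the noncompact plane, a subtle point absent in the spherical case of Theorem \ref{Bergman}, is handled by the decay $|f(z)|e^{-\alpha|z|^2/2} \to 0$ intrinsic to the Fock space.
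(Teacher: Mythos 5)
Your proposal is correct and follows essentially the same route as the paper: apply Theorem \ref{main-contr} on the Euclidean plane with $H(x)=4\pi x$, $u=\log\bigl(|f(z)|e^{-\alpha|z|^2/2}\bigr)$, $c=2\alpha$ and an exponential $F$, identify the affine $\mu_0$, and obtain uniqueness by translating the maximum of $|f|e^{-\alpha|z|^2/2}$ to the origin and using the rigidity of the sub-mean-value inequality for $|f|^p$ on circles, exactly as the paper does by referring back to the argument of Subsection~\ref{Bloch}. The only (immaterial) difference is the normalization constant in $F$ and the resulting value of $t_0$, which stems from an internal inconsistency in the paper's weights rather than from your argument.
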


We emphasize that the above functional takes values in $[-\infty,+\infty)$ and its supremum can be finite or $-\infty$, depending on $G$.
  This theorem for a general convex function $G$ was proved by Lieb and Solovej
  in \cite{LiebSolo1}. The fact that the minimizers are unique for a general
  convex function is new, as far as we know. For the classical Wehrt entropy, the uniqueness
  was
  proved by Carlen in \cite{Carlen}.
  \begin{proof}
    The operators $$T_a f(z) = e^{\alpha \bar a z -\alpha |a|^2/2}f(z-a),$$ with $a\in\mathbb{C}$, are an isometry in 
    the
    Fock spaces. Moreover, under the assumption \eqref{constraint},  $|f(z) e^{-\alpha |z|^2/2}|\le 1$ for all
$z\in\mathbb
    C$ (see e.g. \cite{Zhu}).
    The result follows by applying Theorem~\ref{main-contr} using the classical
    isoperimetric inequality in the plane, that is \eqref{isop} with $H(x) = 4\pi x$ (see e.g. \cite{Oss}), and taking $u =
    \log(|f(z)|e^{-\alpha |z|^2/2})$, hence $\Delta u=-2\alpha$, and 
$F(t) = \frac{p\alpha}{2\pi} e^{p t}$. Here $\mu_0(t)=-2\pi 
t/\alpha$ for $t\in (-\infty,0)$; hence $t_0=0$. The uniqueness
    of
    the maximizers follows as in Subsection~\ref{Bloch} (here the supremum $\sup_{z\in\mathbb{C}} u$ is attained because $\lim_{z\to\infty} |f(z)|e^{-\alpha |z|^2/2}=0$, cf. \cite{Zhu}).
  \end{proof}
  \subsection{SU(1,1) coherent states}
  Now we consider, for $\alpha>0$, $p>0$, the weighted Bergman space of analytic functions $f$ in
  the
  unit disk $\mathbb D\subset \mathbb C$, with $p$-(quasi-)norm
  \[
  \int_{\mathbb D} (\alpha-1) |f(z)|^p (1-|z|^2)^{\alpha}\,dm(z)<\infty,
  \]
 where
 $$dm(z)=\frac{dxdy}{\pi(1-|z|^2)^2}$$ is the area element, for $z=x+iy$.
  When $p=2$ we obtain a reproducing kernel Hilbert space, and the coherent states are given by $(1-z\bar
      a)^{-2\alpha/p}$, $a\in\mathbb{D}$; see \cite{Hed_etal}.

      The issue addressed in the previous subsections was reformulated for these spaces as a function
  theory problem in \cite{LiebSolo3}. This problem had been
  considered, and some partial solutions found in \cite{Brevig} and
  \cite{Bayart}. Finally,  the following theorem was proved in
\cite{Kulikov}*{Theorem 1.2 and Remark 4.3}.
    \begin{theorem}
    Let $G:[0,1]\to \R$ be a continuous convex function such that
    $G(0) = 0$.
    Let $\alpha > 1$, $p>0$. The supremum of the functional
    \begin{equation}
      \int_{\mathbb D} G\left(|f(z)|^p(1-|z|^2)^\alpha\right)dm(z)
    \end{equation}
    subject to the condition
    that $f\in
    \mathcal H(\mathbb D)$ and
    $$\int_{\mathbb D} (\alpha-1) |f(z)|^p (1-|z|^2)^{\alpha}\,dm(z) = 1$$
    is attained at $f(z) = \frac{(1-|a|^2)^{\alpha/p}}{(1-z\bar
      a)^{2\alpha/p}}$ for any $a\in \mathbb D$.
    If $G$ is not linear on $[0,1]$, and this supremum is finite (i.e. $>-\infty$), then these are the only maximizers, up to a unimodular factor.
  \end{theorem}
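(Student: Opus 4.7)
The plan is to apply Theorem~\ref{main-contr} on the Poincar\'e disc, taking $M=\mathbb D$ equipped with the Riemannian metric whose area form is $dm$. A direct computation shows this metric has constant Gaussian curvature equal to $-4\pi$, so the Bol--Fiala isoperimetric inequality on a simply connected surface of constant negative curvature reads
\[
|\partial A|_m^2\ge 4\pi|A|_M(1+|A|_M),
\]
that is, \eqref{isop} holds with $H(x)=4\pi x(1+x)$.

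For $f\in\mathcal H(\mathbb D)$ subject to the stated normalization, I would set
\[
u(z)=\log\bigl(|f(z)|(1-|z|^2)^{\alpha/p}\bigr),
\]
extended to $-\infty$ on the zero set of $f$ (see Remark~\ref{remark zero}). Using $\Delta_M=\pi(1-|z|^2)^2\Delta_e$ together with harmonicity of $\log|f|$ off the zero set and the identity $\Delta_e\log(1-|z|^2)=-4/(1-|z|^2)^2$, one finds $\Delta_M u=-4\pi\alpha/p$, so the hypothesis $\Delta_M u\ge -c$ holds with $c=4\pi\alpha/p$. Choosing $F(t)=(\alpha-1)e^{pt}$ rewrites the normalization as \eqref{condition-contr}, and replacing the convex function in the theorem by $y\mapsto G(y/(\alpha-1))$ (still convex and vanishing at $0$) identifies the left-hand side of \eqref{result-contr} with the functional to be estimated.

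To identify $\mu_0$, the ODE \eqref{diffeq} becomes $g'(t)=-p(1+g(t))/\alpha$, whose positive decreasing solutions are $g(t)=Ae^{-pt/\alpha}-1$; requiring $\lim_{t\to t_0^-}g(t)=0$ and then $\int F'\mu_0\,dt=1$ forces $t_0=0$ and
\[
\mu_0(t)=e^{-pt/\alpha}-1,\qquad t<0,
\]
which a quick polar-coordinate computation shows coincides with the hyperbolic distribution function of $\log(1-|z|^2)^{\alpha/p}$, i.e.\ of $u$ when $f\equiv 1$. Theorem~\ref{main-contr} then yields the stated upper bound. For any $a\in\mathbb D$ the M\"obius isometry $T_af(z)=(1-|a|^2)^{\alpha/p}(1-\bar a z)^{-2\alpha/p}f\bigl((a-z)/(1-\bar a z)\bigr)$ satisfies $|T_af(z)|^p(1-|z|^2)^\alpha=|f(w)|^p(1-|w|^2)^\alpha$ with $w=(a-z)/(1-\bar a z)$, so applied to $f\equiv 1$ it produces the claimed extremizers, all of which attain equality.

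For the uniqueness clause, assume $G$ is not linear on $[0,1]$ and the supremum is finite. By Theorem~\ref{main-contr}, $u$ has the same distribution as $u_0=\log(1-|z|^2)^{\alpha/p}$, so in particular $\sup u=0$ and $|f(z)|^p(1-|z|^2)^\alpha\le 1$ throughout $\mathbb D$. If this supremum were attained at some $z_0\in\mathbb D$, applying $T_{z_0}$ would reduce to $z_0=0$, and the polar-integration subharmonicity argument of Subsection~\ref{Bloch}---applied with the weight $(1-r^2)^{\alpha-2}$, which makes the weighted Bergman $p$-norm of $f\equiv 1$ equal to $1$---would identify $T_{z_0}f$ as a unimodular constant, whence $f$ is a coherent state up to a unimodular factor. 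The main obstacle, absent in the compact Bloch case, is that $\sup_{z\in\mathbb D}|f(z)|(1-|z|^2)^{\alpha/p}$ need not be attained in $\mathbb D$; I would overcome this by choosing $z_n\in\mathbb D$ with $|f(z_n)|(1-|z_n|^2)^{\alpha/p}\to 1$, noting that the pointwise bound $|T_{z_n}f(z)|^p(1-|z|^2)^\alpha\le 1$ makes $f_n=T_{z_n}f$ locally uniformly bounded, extracting by normal families a holomorphic limit $g$, and applying Fatou's lemma to get $\|g\|_p\le 1$ while $|g(0)|=1$; the subharmonicity estimate is then saturated, forcing $g$ to be a unimodular constant, and back-propagating this through the distribution-preserving shifts $T_{z_n}$ identifies the original $f$ with a coherent state up to a unimodular factor.
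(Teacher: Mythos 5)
Your reduction to Theorem~\ref{main-contr} is exactly the paper's: same manifold, $H(x)=4\pi x(1+x)$, $u=\log\bigl(|f(z)|(1-|z|^2)^{\alpha/p}\bigr)$, $c=4\pi\alpha/p$, $F(t)=(\alpha-1)e^{pt}$, $\mu_0(t)=e^{-pt/\alpha}-1$, $t_0=0$; all of these computations check out (you even use the correct conformal factor $(1-|z|^2)^2$ where the paper's text has a typo), and the rescaling $y\mapsto G(y/(\alpha-1))$ is a legitimate way to match the functional to \eqref{result-contr}. The problem is in the uniqueness clause, precisely at the step you yourself flag as the main obstacle. Your conclusion ``back-propagating this through the distribution-preserving shifts $T_{z_n}$ identifies the original $f$ with a coherent state'' is not a valid deduction. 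From $T_{z_n}f\to g$ locally uniformly with $g$ a unimodular constant, you can only recover $f$ as $\lim T_{z_n}^{-1}g$ when the points $z_n$ stay in a compact subset of $\mathbb{D}$. If $z_n\to\partial\mathbb{D}$ --- which is exactly the scenario your normal-families detour is meant to handle --- the functions $T_{z_n}^{-1}g$ are coherent states whose mass escapes to the boundary; they tend to $0$ locally uniformly, and no identification of $f$ follows. Locally uniform convergence plus Fatou gives only $\|g\|_p\le 1$, not norm convergence of $f_n$ to $g$, so you cannot transfer the conclusion back. As written, the argument would equally ``prove'' that any $f$ whose normalized modulus approaches its supremum only along a boundary-escaping sequence is a coherent state, which is absurd; you must rule out $z_n\to\partial\mathbb{D}$, and you have not.

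The gap is real but easily closed, and the cleanest fix shows the obstacle never arises: for $f$ in this weighted Bergman space one has $|f(z)|^p(1-|z|^2)^{\alpha}\to 0$ as $|z|\to 1^-$ (a standard consequence of the density of polynomials together with the uniform pointwise bound $|f(z)|^p(1-|z|^2)^{\alpha}\le \|f\|^p_{p}$; see \cite{Hed_etal}). Since Theorem~\ref{main-contr} forces $\sup_{\mathbb{D}}|f(z)|(1-|z|^2)^{\alpha/p}=1>0$ for a maximizer, the supremum is attained at an interior point $z_0$, and after one M\"obius shift the polar-coordinate subharmonicity argument of Subsection~\ref{Bloch} applies verbatim. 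This is what the paper does (it records the analogous attainment remark explicitly in the Glauber case and says nothing more here). If you prefer to keep the normal-families route, you must add an argument excluding boundary escape, e.g.\ by exploiting that the distribution function of $u$ is pinned to $\mu_0$, but that is more work than the one-line attainment observation.
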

  \begin{proof}
    Again this is now a corollary of Theorem~\ref{main-contr} where the
    manifold is $\mathbb D$ endowed with the hyperbolic metric, the function
    $u(z) = \log (|f(z)|(1-|z|^2)^{\alpha/p})$, the function $F(t) =
    (\alpha-1)\exp(pt)$ and we have the isoperimetric inequality \eqref{isop} in the
    hyperbolic space with $H(x) = 4\pi (x+x^2)$ (see e.g. \cite{Oss}). The Laplace-Beltrami operator now is given by $\Delta_{\mathbb{D}}=\pi(1+|z|^2)^2\Delta_e$, where $\Delta_e$ is the Euclidean Laplace operator. A straitforward computation shows that
    \[
  \Delta_{\mathbb{D}} u\geq -4\pi\alpha/p,
   \]
   which yields $\mu_0(t)= e^{-pt/\alpha}-1$ for $t\in(-\infty,0)$; hence $t_0=0$.  The uniqueness
    of
    the maximizers follows as in Subsection~\ref{Bloch}.
  \end{proof}

  \section{Local estimates: Faber-Krahn inequalities}\label{local}
 In this section we prove a local counterpart of the estimate in 
Theorem \ref{main-contr}. A similar result had first appeared in the special 
case of Glauber coherent states in \cite{NicTil1}. The following theorem 
provides a far reaching generalization of that result, and the proof is even 
simpler.
 \begin{theorem}\label{thm faber-krahn}
 Under the same assumption and notation of Theorem \ref{main-contr}, suppose in addition that $G(x)>0$ for $x>0$.  Then for every set $\Omega\subset M$ and every $u$ as in Theorem \ref{main-contr} we have
\begin{equation}\label{result-contr2}
     \int_\Omega G(F(u(p)))\, d{\rm Vol}(p) \le \int_{0}^{|\Omega|_M} G(F(\mu_0^{-1}(s)))ds.
    \end{equation}
    Moreover equality in \eqref{result-contr2}
    is possible for some $u$ as above and $\Omega$ with $|\Omega|_M>0$ if and only if $|u^{-1}([t,
    +\infty))|_M = \mu_0(t)$ for all $t<t_0$ and $\Omega=u^{-1}([t,
    +\infty))$, with $t=\mu_0^{-1}(|\Omega|_M)$ (up to null sets) if $|\Omega|_M<|M|_M$, or $\Omega=M$ if $|\Omega|_M=|M|_M$.
 \end{theorem}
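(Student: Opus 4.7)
Setting $h:=G\circ F$, the hypotheses on $G$ (convex, $G(0)=0$, $G(x)>0$ on $(0,F(t_0)]$) force $G$, and hence $h$, to be non-decreasing and non-negative, with $h(-\infty)=0$. Let $\mu(t):=|u^{-1}([t,+\infty))|_M$ and denote the decreasing rearrangement of $u$ by $u^*(s):=\inf\{t:\mu(t)\le s\}$, which satisfies $u^*(s)\le a\Leftrightarrow\mu(a)\le s$. The first step is the Hardy--Littlewood rearrangement inequality applied to $h\circ u$ and $\mathbf{1}_\Omega$: since $h$ is non-decreasing, $(h\circ u)^*=h\circ u^*$, so
\[
\int_\Omega G(F(u))\,d\mathrm{Vol}\le\int_0^{|\Omega|_M} h(u^*(s))\,ds.
\]
It therefore suffices to prove $A(S)\le B(S)$ at $S=|\Omega|_M$, where $A(S):=\int_0^S h(u^*(s))\,ds$ and $B(S):=\int_0^S h(\mu_0^{-1}(s))\,ds$.

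The crucial step exploits the crossing structure from Lemma \ref{lemma}: either $\mu\equiv\mu_0$ (trivial), or there is a unique $t_1<t_0$ with $\mu\ge\mu_0$ on $(-\infty,t_1]$ and $\mu<\mu_0$ on $(t_1,t_0)$. Setting $\rho:=\mu_0(t_1)$ and using the equivalence above,
\[
u^*(s)\le\mu_0^{-1}(s)\ \text{on}\ (0,\rho], \qquad u^*(s)\ge\mu_0^{-1}(s)\ \text{on}\ [\rho,|M|_M).
\]
Since $h$ is non-decreasing, $\Phi:=B-A$ satisfies $\Phi'\ge0$ on $[0,\rho]$ and $\Phi'\le0$ on $[\rho,|M|_M]$: it is unimodal with maximum at $\rho$. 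The endpoints are controlled as follows: $\Phi(0)=0$ trivially, and $\Phi(|M|_M)\ge0$ by Theorem \ref{main-contr}. For the latter, a Fubini/layer-cake computation gives
\[
\int_0^{|M|_M} h(\mu_0^{-1}(s))\,ds=\int_{-\infty}^{t_0} h'(t)\mu_0(t)\,dt=\int_{-\infty}^{t_0} G'(F(t))F'(t)\mu_0(t)\,dt,
\]
which is precisely the right-hand side of \eqref{result-contr}, while $\int_M h(u)\,d\mathrm{Vol}=A(|M|_M)$ by the defining property of the rearrangement. By unimodality, $\Phi(S)\ge\Phi(0)=0$ on $[0,\rho]$ and $\Phi(S)\ge\Phi(|M|_M)\ge0$ on $[\rho,|M|_M]$; hence $\Phi\ge0$ throughout $[0,|M|_M]$, and evaluating at $S=|\Omega|_M$ yields \eqref{result-contr2}.

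For the equality case, equality in \eqref{result-contr2} forces equality both in the Hardy--Littlewood step (so $\Omega$ agrees, up to null sets, with some super-level set $u^{-1}([\tau,+\infty))$) and $\Phi(|\Omega|_M)=0$. Unimodality together with $\Phi(0)=0$ and $\Phi(|M|_M)\ge0$ then forces $\Phi\equiv0$ on $[0,|\Omega|_M]$ when $|\Omega|_M\le\rho$, or on $[|\Omega|_M,|M|_M]$ when $|\Omega|_M\ge\rho$; in either case $h(u^*)=h(\mu_0^{-1})$ a.e.\ on a non-degenerate interval. Combining this with the strict parts of Lemma \ref{lemma} ($\mu<\mu_0$ strictly on $(t_1,t_0)$ and $\mu>\mu_0$ for $t_1-t$ large enough), a short case analysis rules out the crossing scenario and forces $\mu\equiv\mu_0$ on $(-\infty,t_0)$; the identification $\tau=\mu_0^{-1}(|\Omega|_M)$ then follows from $\mu(\tau)=|\Omega|_M=\mu_0(\tau)$. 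The main obstacle is this last step, since possible flat segments of the convex function $G$ (and thus of $h$) require a careful argument to pass from $h\circ u^*=h\circ\mu_0^{-1}$ to $u^*=\mu_0^{-1}$ via the crossing structure rather than by simple pointwise identification.
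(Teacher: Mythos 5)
Your argument follows the paper's own proof almost step for step: the Hardy--Littlewood/bathtub reduction to the rearrangement $u^*$, and then the unimodal comparison function $\Phi$ (called $\varphi$ in the paper) whose endpoint values are controlled by $\Phi(0)=0$ and by Theorem \ref{main-contr} at $|M|_M$, with the sign of $\Phi'$ on either side of $\rho=\mu_0(t_1)$ coming from the crossing structure of Lemma \ref{lemma}. The proof of the inequality \eqref{result-contr2} is correct and complete.

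The equality case, however, is left with a gap that you yourself flag: you worry that flat segments of $G$ might prevent the passage from $h\circ u^*=h\circ\mu_0^{-1}$ to $u^*=\mu_0^{-1}$. This obstacle is vacuous under the stated hypotheses: since $G$ is convex with $G(0)=0$ and $G(x)>0$ for $x>0$, convexity gives $G(x)\le (x/y)\,G(y)<G(y)$ for $0<x<y\le F(t_0)$, so $G$ is strictly increasing on $[0,F(t_0)]$ and $h=G\circ F$ has no flat segments. The paper records exactly this observation immediately after the theorem statement, and it is the reason why no non-linearity assumption on $G$ is needed here, unlike in Theorem \ref{main-contr}. With strict monotonicity in hand your case analysis closes cleanly: if $\mu\not\equiv\mu_0$, Lemma \ref{lemma} gives $u^*<\mu_0^{-1}$ strictly on $(0,\rho)$ and $u^*>\mu_0^{-1}$ for $s$ near $|M|_M$, so $\Phi$ is strictly increasing on $(0,\rho)$ and strictly decreasing near $|M|_M$, whence $\Phi>0$ on all of $(0,|M|_M)$ and equality is impossible unless $\mu\equiv\mu_0$. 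One further small point: equality in the Hardy--Littlewood step only sandwiches $\Omega$ between $u^{-1}((\tau,+\infty))$ and $u^{-1}([\tau,+\infty))$ up to null sets; to conclude that $\Omega$ is a superlevel set you should first establish $\mu=\mu_0$ and then use the continuity of $\mu_0$ to see that the level set $u^{-1}(\{\tau\})$ is null, so the order of these two deductions matters.
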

 Observe that the additional assumption $G(x)>0$ for $x>0$ implies that  $G:[0,+\infty)\to\mathbb{R}$ is strictly increasing. Also, we have the characterization of the maximizers without any further assumption on $G$ (such as non-linearity).
 \begin{proof}
For $u$ as in the statement, let $\mu(t) =\mu_u(t)= |u^{-1}([t, +\infty))|_M$, $t\in\mathbb{R}$, be its distribution function and $u^\ast(s)=\sup\{t:\mu(t)<s\}$, for $0\le s< |M|_M$, its non-increasing rearrangement.  If $|\Omega|_M<|M|_M$, let $\tilde{\Omega}\subset M$ be any subset with $|\tilde{\Omega}|_M=|\Omega|_M$ and $u^{-1}((t,+\infty))\subset \tilde{\Omega}\subset u^{-1}([t,+\infty))$, with $t=u^\ast(|\Omega|_M)$ (up to null sets). If $|\Omega|_M=|M|_M$, let $\tilde{\Omega}=M$.  Then it is easy to check that
 \begin{equation}\label{eq 1}
  \int_\Omega G(F(u(p)))\, d\textrm{Vol}(p)\leq  \int_{\tilde{\Omega}} G(F(u(p)))\, d\textrm{Vol}(p)= \int_{0}^{|\Omega|_M} G(F(u^\ast(s)))ds
 \end{equation}
 where the equality follows from the fact that $u$ and $u^\ast$ are equi-measurable and the Fubini theorem. Hence we are going to prove that
 \begin{equation}\label{eq 2}
 \int_{0}^{s} G(F(u^\ast(
 \tau)))d\tau\leq \int_{0}^{s} G(F(\mu_0^{-1}(\tau)))d\tau
 \end{equation}
for $0\leq s\leq |M|_M$. This is clear if $\mu(t)=\mu_0(t)$ for $t<t_0$. Suppose then that $\mu\not=\mu_0$ at some point.
Consider the function
\[
\varphi(s):= \int_{0}^{s} G(F(\mu_0^{-1}(\tau)))d\tau - \int_{0}^{s} G(F(u^\ast(
 \tau)))d\tau
\]
for $0\leq s<|M|_M$. Clearly $\varphi$ is continuous and $\varphi(0)=0$. As a consequence of Lemma \ref{lemma} $\varphi$ is strictly increasing on $[0,\mu_0(t_1)]$; indeed, for $t_1<t<t_0$ we have $\mu(t)<\mu_0(t)$ which implies that $\mu_0^{-1}(s)>u^\ast(s)$ for $0<s<\mu_0(t_1)$. Similarly, on $[\mu_0(t_1),|M|_M)$ $\varphi$ is non-increasing (in fact strictly decreasing for $s<|M|_M$ large enough). Finally
\begin{align*}
\lim_{s\to|M|_M^-} \varphi(s)&= \int_{0}^{|M|_M} G(F(\mu_0^{-1}(\tau)))d\tau - \int_{0}^{|M|_M} G(F(u^\ast(
 \tau)))ds\\
 &= \int_0^{t_0} G'(F(t))F'(t)\mu_0(t)dt - \int_M G(F(u(p)))\, d\textrm{Vol}(p)>0
\end{align*}
where the latter inequality follows from Theorem \ref{main-contr}. As a consequence, $\varphi(s)>0$ for $0< s<|M|_M$. This concludes the proof of \eqref{result-contr2}.

The characterization of the cases of equality, also follows from the above discussion. The claim about $\Omega$ follows from \eqref{eq 1}, since in that case equally occurs in \eqref{eq 1} and the level sets  $u^{-1}(\{t\})$ have zero measure, because $\mu_0(t)$ is continuous (if $|\Omega|_M=|M|_M$, hence $\tilde{\Omega}=\Omega$, we also use the fact that $G(F(u(p)))$ is continuous and strictly positive on $M$).
 \end{proof}
 We now specialize the above result to the three geometries (spherical, Euclidean, hyperbolic). We begin with the spherical case and we use the notation of Subsection~\ref{Bloch}; in particular, for $z = x + iy\in \Cm$,  $dm(z) = \frac{1}{(1+|z|^2)^2} \frac{dxdy}{\pi}.$ Observe that $m(\mathbb{C})=1$.
  \begin{theorem}\label{Bergman fk}
    Let $G:[0,1]\to \R$ be a continuous convex function such that $G(0) =
    0$ and $G(x)>0$ for $x>0$. Let $j\in\mathbb{N}$, $ p> 0$.
Then for every $f\in
    \mathcal P_j$ with $||f||_{\mathcal P_j,p}=1$ and $\Omega\subset\mathbb{C}$,

      \begin{equation}\label{bergmanineq fk}
      \intt_\Omega G\left(\frac{|f(z)|^p}{(1+|z|^2)^{pj/2}}\right)dm(z)\leq \int_0^{m(\Omega)} G((1-s)^{pj/2})ds.
    \end{equation}
  Equality occurs in \eqref{bergmanineq fk} for some $f$ and $\Omega$ with $m(\Omega)>0$ if and only if
 $f(z) = ( \beta z + \bar\alpha )^j$ for some $\alpha,\beta\in\mathbb{C}$, 
$|\alpha|^2+|\beta|^2=1$ and  $\Omega$ is (up to a null set) a 
superlevel set of $|f(z)|/(1+|z|^2)^{j/2}$ (which is a disk in $\mathbb{C}$) if 
$m(\Omega)<1$, or $\Omega=\mathbb{C}$ if $m(\Omega)=1$.
  \end{theorem}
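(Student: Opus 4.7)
The plan is to specialize Theorem~\ref{thm faber-krahn} to the spherical setting already built in the proof of Theorem~\ref{Bergman}. I take $M$ to be the round sphere of radius $\frac{1}{2\sqrt{\pi}}$, identified with $\mathbb{C}$ via stereographic projection, and $u(z) = \log\bigl(|f(z)|/(1+|z|^2)^{j/2}\bigr)$ (extended by $-\infty$ on the zeros of $f$, cf.\ Remark~\ref{remark zero}). That earlier proof already furnishes the isoperimetric function $H(x) = 4\pi x(1-x)$, the bound $\Delta_M u \ge -2\pi j$, the normalizing choice $F(t) = (pj/2+1)e^{pt}$ which makes \eqref{condition-contr} equivalent to $\|f\|_{\mathcal P_j,p}=1$, and the explicit extremal profile $\mu_0(t) = 1-e^{2t/j}$ on $(-\infty,0)$ with $t_0=0$.

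The only cosmetic discrepancy is that the integrand on the LHS of \eqref{bergmanineq fk} is written without the factor $(pj/2+1)$ that sits inside $F$. I sidestep this by applying Theorem~\ref{thm faber-krahn} not to $G$ itself but to the rescaled convex function $\widetilde{G}:[0,pj/2+1]\to\mathbb{R}$ defined by $\widetilde{G}(x):=G(x/(pj/2+1))$, which is again continuous, convex, vanishes at $0$, and is strictly positive on $(0,pj/2+1]$. Since $\widetilde{G}(F(u(z))) = G\bigl(|f(z)|^p/(1+|z|^2)^{pj/2}\bigr)$ and $\mu_0^{-1}(s)=(j/2)\log(1-s)$ gives $F(\mu_0^{-1}(s))/(pj/2+1)=(1-s)^{pj/2}$, the estimate \eqref{result-contr2} applied to $\widetilde{G}$ converts immediately into \eqref{bergmanineq fk}.

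For the equality case, Theorem~\ref{thm faber-krahn} forces $\mu_u(t)=\mu_0(t)$ for every $t<0$ and $\Omega$ to coincide, up to a null set, either with $\mathbb{C}$ (when $m(\Omega)=1$) or with a superlevel set $u^{-1}([t,+\infty))$ for $t=\mu_0^{-1}(m(\Omega))$ (when $m(\Omega)<1$). The distributional identity $\mu_u=\mu_0$ is then plugged into the subharmonicity argument from the end of the proof of Theorem~\ref{Bergman}: after precomposing $f$ with a suitable $T_{\alpha,\beta}$ one places the supremum of $|f|/(1+|z|^2)^{j/2}$ at $z=0$, and then subharmonicity of $|f|^p$ together with integration in polar coordinates forces $f$ to be constant, whence $f(z)=(\beta z+\bar\alpha)^j$ up to a unimodular factor. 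The superlevel sets of $|f|/(1+|z|^2)^{j/2}$ for such a coherent state are then images under the Möbius map $z\mapsto(\alpha z-\bar\beta)/(\beta z+\bar\alpha)$ of disks around the origin, hence (generalized) disks in $\mathbb{C}$. The main obstacle is really just the bookkeeping with $\widetilde{G}$; once that rescaling is in place, both the inequality and its characterization of extremizers follow directly from Theorem~\ref{thm faber-krahn} combined with the subharmonicity step already carried out for Theorem~\ref{Bergman}.
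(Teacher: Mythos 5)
Your proof is correct and follows essentially the same route as the paper: both apply Theorem~\ref{thm faber-krahn} with the spherical data ($H(x)=4\pi x(1-x)$, $F(t)=(pj/2+1)e^{pt}$, $\mu_0(t)=1-e^{2t/j}$, $t_0=0$) already set up in the proof of Theorem~\ref{Bergman}, and settle the equality case by combining $\mu_u=\mu_0$ with the subharmonicity argument given there. Your explicit rescaling $\widetilde{G}(x)=G(x/(pj/2+1))$ merely makes precise a normalization that the paper leaves implicit.
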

  \begin{proof}
We apply Theorem \ref{thm faber-krahn}, by arguing as in the proof of Theorem  \ref{Bergman}. In particular we have $\mu_0(t)=1-e^{2t/j}$ for $t\in (-\infty,0)$, hence if $u(z):=\log(|f(z)|/(1+|z|^2)^{j/2})$ is an extremal function, its maximum value is $t_0=0$, and the maximum of $|f(z)|/(1+|z|^2)^{j/2}$ is $1$. This gives the desired extremal functions.
  \end{proof}
Similarly we obtain the following results for Glauber and $SU(1,1)$ coherent states, that we state without proof.

 The  following result generalizes \cite{NicTil1}*{Theorem 3.1}, which
corresponds to the special case $p=2, \alpha=\pi$ and $G(x)=x$. Here
$\mathbb{C}$ is endowed with the Lebesgue measure $dA(z)=dxdy$, with $z=x+iy$.
We also write $|\Omega|=A(\Omega)$.

 \begin{theorem}\label{Bargmann fk}
    Let $G:[0,1]\to \R$ be a continuous convex function such that $G(0) =
    0$ and $G(x)>0$ for $x>0$. Let $\alpha>0$, $p>0$. Then for every $f\in \mathcal{H}(\mathbb{C})$ satisfying
$$
    \frac{p\alpha}{\pi}\int_{\mathbb C} |f(z) e^{-\alpha |z|^2/2}|^p dA(z)= 1
$$
and $\Omega\subset\mathbb{C}$, we have
      \begin{equation}\label{bargmannineq fk}
      \intt_\Omega G\left(|f(z)e^{-\alpha |z|^2/2}|^p\right)dA(z)\leq \int_0^{|\Omega|} G(e^{-p\alpha s/(2\pi)})ds.
    \end{equation}
  Equality occurs in \eqref{bargmannineq fk} for some $f$ and $\Omega$ with $|\Omega|>0$ if and only if
 $f(z) = e^{\alpha \bar a z-\alpha |a|^2/2}$, for some $a\in\mathbb{C}$, up a 
unimodular factor, and  $\Omega$ is (up to a null set) a superlevel 
set of $|f(z)|e^{-\alpha |z|^2/2}$ (which is a disk in $\mathbb{C}$) if 
$|\Omega|<\infty$, or $\Omega=\mathbb{C}$ if $|\Omega|=\infty$.
  \end{theorem}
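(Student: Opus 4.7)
The plan is to derive Theorem \ref{Bargmann fk} as an immediate consequence of the abstract local estimate, Theorem \ref{thm faber-krahn}, in direct analogy with the proof of the spherical analog Theorem \ref{Bergman fk}. I would reuse the setup already employed for the global Glauber coherent states theorem of the preceding subsection: take $M=\mathbb{C}$ with the Lebesgue measure $dA=dx\,dy$ and the classical planar isoperimetric function $H(x)=4\pi x$; set $u(z):=\log(|f(z)|e^{-\alpha|z|^2/2})$ (extended by $-\infty$ at zeros of $f$, as in Remark \ref{remark zero}), so that $\Delta u\ge -2\alpha$ and one may take $c=2\alpha$; and choose the auxiliary function $F(t)$ of the form $Ce^{pt}$, with the constant $C$ adjusted so that \eqref{condition-contr} reproduces the normalization \eqref{constraint} (up to absorbing stray constants into $G$, which is harmless since rescaling the argument of $G$ preserves convexity, the vanishing at $0$, and the positivity on $(0,\infty)$).

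With these choices, the differential equation \eqref{diffeq} reduces to $g'(t)=-2\pi/\alpha$, whose unique solution vanishing at $t_0=0$ and satisfying the normalization condition is $\mu_0(t)=-2\pi t/\alpha$ on $(-\infty,0)$; hence $\mu_0^{-1}(s)=-\alpha s/(2\pi)$. Substituting into \eqref{result-contr2} of Theorem \ref{thm faber-krahn} yields precisely the right-hand side of \eqref{bargmannineq fk}, since $F(\mu_0^{-1}(s))$ is proportional to $e^{-p\alpha s/(2\pi)}$ and the multiplicative constant can be absorbed into $G$ as explained above.

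For the characterization of equality, Theorem \ref{thm faber-krahn} already provides both necessary conditions: first, the distribution function $\mu(t)=|u^{-1}([t,+\infty))|_M$ must coincide with $\mu_0(t)$ for all $t<0$; second, $\Omega$ must equal a superlevel set $u^{-1}([t,+\infty))$ up to a null set when $|\Omega|<\infty$, or all of $\mathbb{C}$ when $|\Omega|=\infty$. The first condition, combined with the uniqueness part of the global Glauber coherent states theorem above, forces $f(z)=e^{\alpha\bar a z-\alpha|a|^2/2}$ for some $a\in\mathbb{C}$, up to a unimodular factor. For such $f$ one computes $|f(z)|e^{-\alpha|z|^2/2}=e^{-\alpha|z-a|^2/2}$, whose superlevel sets are Euclidean disks centered at $a$; this matches the stated description of the extremal $\Omega$.

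The argument is essentially mechanical given the earlier results; the only bookkeeping is the matching of the normalization constants between $F$, the constraint \eqref{constraint}, and $\mu_0$. No genuinely new obstacle arises beyond what was already handled in the spherical case of Theorem \ref{Bergman fk}, and in particular the identification of the extremal domains as Euclidean disks is a one-line computation from the explicit form of the coherent states.
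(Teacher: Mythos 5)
Your proposal is correct and follows exactly the route the paper intends: Theorem~\ref{Bargmann fk} is stated there without proof as an application of Theorem~\ref{thm faber-krahn} with the Glauber setup of the preceding subsection ($H(x)=4\pi x$, $u=\log(|f(z)|e^{-\alpha|z|^2/2})$ via Remark~\ref{remark zero}, $c=2\alpha$, $F(t)=Ce^{pt}$, $\mu_0(t)=-2\pi t/\alpha$, $t_0=0$), precisely as you describe. Your bookkeeping of the normalization constant by absorbing it into $G$, and the identification of the extremals ($\mu=\mu_0$ forcing $\sup u=0$ and hence $f$ a coherent state, whose superlevel sets are Euclidean disks centered at $a$) match the intended argument.
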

  Finally, the following result generalizes \cite{RamTil}*{Theorem 3.1}, which corresponds to the particular case $p=2$, $G(x)=x$. Here the
unit disk $\mathbb{D}$ is endowed with the measure $dm(z) =
\frac{1}{(1-|z|^2)^2} \frac{dxdy}{\pi},$ for $z=x+iy\in  \mathbb{D}$.
 \begin{theorem}\label{Bergmanhyp fk}
    Let $G:[0,1]\to \R$ be a continuous convex function such that $G(0) =
    0$ and $G(x)>0$ for $x>0$. Let $\alpha>1$, $p>0$. Then for every $f\in\mathcal{H}(\mathbb{D})$ satisfying
     $$\int_{\mathbb D} (\alpha-1) |f(z)|^p (1-|z|^2)^{\alpha}\,dm(z) = 1$$
and $\Omega\subset\mathbb{C}$, we have
      \begin{equation}\label{bergmanhypineq fk}
      \intt_{\mathbb D}  G\left(|f(z)|^p(1-|z|^2)^\alpha\right)dm(z)\leq \int_0^{m(\Omega)} G((1+s)^{-\alpha})ds.
    \end{equation}
  Equality occurs in \eqref{bergmanhypineq fk} for some $f$ and $\Omega$ with $m(\Omega)>0$ if and only if
 $f(z) = \frac{(1-|a|^2)^{\alpha/p}}{(1-z\bar
      a)^{2\alpha/p}}$ for some $a\in \mathbb D$, up to a unimodular factor, and
 $\Omega$ is (up to a null set) a superlevel set of 
$|f(z)|(1+|z|^2)^{\alpha/p}$
(which is a disk in $\mathbb{D}$) if $m(\Omega)<\infty$, or
$\Omega=\mathbb{D}$ if $m(\Omega)=\infty$.
  \end{theorem}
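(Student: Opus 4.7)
The plan is to specialize Theorem \ref{thm faber-krahn} to $M=\mathbb{D}$ endowed with the hyperbolic metric, reusing verbatim the setup from the $SU(1,1)$ coherent states theorem in Subsection 4.3. Take $u(z):=\log\bigl(|f(z)|(1-|z|^2)^{\alpha/p}\bigr)$ and $F(t):=(\alpha-1)e^{pt}$, invoking Remark \ref{remark zero} to accommodate the zeros of $f$. The geometric and analytic ingredients have already been verified in Subsection 4.3: the hyperbolic isoperimetric inequality \eqref{isop} holds with $H(x)=4\pi(x+x^2)$; one has $\Delta_{\mathbb{D}} u\ge -4\pi\alpha/p$; the normalization \eqref{condition-contr} reduces to the assumed constraint; and the extremal distribution is $\mu_0(t)=e^{-pt/\alpha}-1$ for $t\in(-\infty,0)$ with $t_0=0$.

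To match the integrand $G(|f|^p(1-|z|^2)^\alpha)=G(e^{pu})$ with the form $\tilde G(F(u))$ required by Theorem \ref{thm faber-krahn}, I would set $\tilde G(x):=G(x/(\alpha-1))$. This is continuous and convex on $[0,\alpha-1]$, vanishes at $0$, and is strictly positive on $(0,\alpha-1]$, so it satisfies all the hypotheses of Theorem \ref{thm faber-krahn}. Inverting $\mu_0$ gives $\mu_0^{-1}(s)=-(\alpha/p)\log(1+s)$, hence $F(\mu_0^{-1}(s))/(\alpha-1)=(1+s)^{-\alpha}$, so Theorem \ref{thm faber-krahn} directly yields
$$\int_\Omega G\bigl(|f(z)|^p(1-|z|^2)^\alpha\bigr)\,dm(z)\le \int_0^{m(\Omega)} G\bigl((1+s)^{-\alpha}\bigr)\,ds,$$
which is \eqref{bergmanhypineq fk}. (Note that $m(\mathbb{D})=+\infty$ for the hyperbolic measure, so the dichotomy in the last clause of Theorem \ref{thm faber-krahn} corresponds to $m(\Omega)<\infty$ vs.\ $m(\Omega)=\infty$.)

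For the characterization of equality, Theorem \ref{thm faber-krahn} forces $|u^{-1}([t,+\infty))|_M=\mu_0(t)$ for every $t<0$ and $\Omega$ to coincide, up to a null set, with a superlevel set of $u$ (or with $\mathbb{D}$ when $m(\Omega)=\infty$). The first condition, combined with the uniqueness argument already performed in Subsection 4.3 (based on subharmonicity of $|f|^p(1-|z|^2)^\alpha$ after a Möbius renormalization that sends the maximum to the origin, plus the $SU(1,1)$-invariance of the norm), pins $f$ down to $(1-|a|^2)^{\alpha/p}/(1-z\bar a)^{2\alpha/p}$ up to a unimodular factor. For such an $f$, a direct computation gives $|f(z)|(1-|z|^2)^{\alpha/p}=[1-|\varphi_a(z)|^2]^{\alpha/p}$ with $\varphi_a(z)=(z-a)/(1-\bar a z)$, so the superlevel sets of $u$ are precisely pseudo-hyperbolic disks centered at $a$, namely Euclidean disks in $\mathbb{D}$.

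No serious obstacle is expected: the statement is a clean specialization of the abstract Faber-Krahn bound, and every geometric and PDE ingredient has already been worked out in the preceding Wehrl-type result. The only step requiring any care is the rescaling $G\mapsto\tilde G$, made necessary by the fact that the integrand in the statement applies $G$ to $e^{pu}$ rather than to $F(u)=(\alpha-1)e^{pu}$; this rescaling is also why the right-hand side of \eqref{bergmanhypineq fk} features the clean profile $(1+s)^{-\alpha}$ instead of $(\alpha-1)(1+s)^{-\alpha}$.
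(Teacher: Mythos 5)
Your proposal is correct and follows exactly the route the paper intends: the paper states this theorem without proof as the hyperbolic analogue of Theorem~\ref{Bergman fk}, i.e.\ an application of Theorem~\ref{thm faber-krahn} with the setup ($u$, $F$, $H$, $\mu_0$, $t_0=0$) already worked out in the $SU(1,1)$ subsection, combined with the uniqueness argument from Subsection~\ref{Bloch}. Your explicit rescaling $G\mapsto\tilde G$ to reconcile $G(e^{pu})$ with $G(F(u))$ is a point the paper glosses over, and you handle it correctly.
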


  \section{Appendix, Proof of Theorem \ref{main-mon}}

 First we establish the Theorem in the case when $u$ is a Morse function (we may assume that $u$ is not constant,
 so that $\mu(t)>0$ for every $t<t_0$).
 In this case, by Theorem \ref{main-diff},
 $u$ is locally absolutely continuous on $(-\infty,t_0)$ and satisfies the differential inequality
 \eqref{diff ineq}
for a.e. $t<t_0$.
 Now consider an arbitrary $t_2<t_0$, and let $g(t)$ be the solution of the (backward) Cauchy problem
 \begin{equation}
 \label{cauchy}
 g(t_2)=\mu(t_2),\qquad g'(t)=-\,\frac {H(g(t))}{c g(t)},\quad t\leq t_2,
 \end{equation}
whose existence on some interval $(a,t_2]$ is guaranteed by the smoothness of $H>0$ and the fact that $\mu(t_2)>0$.
On this interval, combining \eqref{diff ineq} and \eqref{cauchy}, by a standard comparison theorem for ODEs (see e.g. Chapter 1 in \cite{BirkRota}) we obtain that $g(t)\leq \mu(t)$ and, since $\mu$ is locally bounded due to the
assumption that the level sets $\{u\geq t\}$ are compact, this prevents blow up in finite time for $g(t)$.
Therefore, the existence of the solution $g(t)$ (together with the bound $g(t)\leq \mu(t)$) propagates
 $t\leq a$,  and eventually one obtains \eqref{claim2} as claimed.

Now let $u$ be as in Theorem \ref{main-mon}. Since Morse functions are dense in 
the strong $C^2$ topology on $M$ (see e.g. 
\cite{Hirsch}*{Chapter~6, Theorem~1.2}), we can pick a sequence $u_n$ of Morse 
functions such that $|u-u_n|< \frac{1}{n}$, $|\Delta u - \Delta u_n|< 
\frac{1}{n}$.

Let $A_n(t) = u_n^{-1}([t, +\infty))$, $\mu_n(t) = |A_n(t)|_M$ and $A(t) = u^{-1}([t, +\infty))$. Fix numbers $t_1 < t_2 < t_0$. Note that for all $t < t_0 - \frac{1}{n}$ we have $A_n(t+\frac{1}{n}) \subset A(t)\subset A_n(t-\frac{1}{n})$. Applying Morse version of the theorem to the function $u_n$ we get for big enough $n$
$$D_n\left(t_1 + \frac{1}{n}, t_2 - \frac{1}{n}, \mu_n\left(t_2 - \frac{1}{n}\right)\right) \le \mu_n\left(t_1 + \frac{1}{n}\right),$$
where $D_n(t_3, t_4, \mu)$ is the solution to the differential equation $$g'(t) = -\frac{H(g(t))}{(c+\frac{1}{n})g(t)}$$ at $t_3$ with initial condition $g(t_4) = \mu$. Number $n$ should be so big that $\min(c, t_0 - t_2, t_2 - t_1) > \frac{2}{n}$.

By the inclusions for the sets $A_n(t)$, $A(t)$ we have $\mu_n(t_2 -\frac{1}{n}) \ge \mu(t_2)$ and $\mu_n(t_1+\frac{1}{n}) \le \mu(t_1)$.  Applying continuity to the solution of the differential equation on the parameters and the initial datum and the fact that $D(t_3, t_4, \mu) \le D(t_3, t_4, \nu)$ if $\mu \le \nu$ we get 
$$D(t_1, t_2, \mu(t_2)) \le \mu(t_1),$$
as required.	
  \begin{bibdiv}
    \begin{biblist}
      \bib{Bayart}{article}{
        author={Bayart, Fr\'{e}d\'{e}ric},
        author={Brevig, Ole Fredrik},
        author={Haimi, Antti},
        author={Ortega-Cerd\`a, Joaquim},
        author={Perfekt, Karl-Mikael},
        title={Contractive inequalities for Bergman spaces and multiplicative
          Hankel forms},
        journal={Trans. Amer. Math. Soc.},
        volume={371},
        date={2019},
        number={1},
        pages={681--707},
        issn={0002-9947},
      }

     \bib{BirkRota}{book}{
         author={Birkhoff, Garrett},
         author={Rota, Gian-Carlo},
         title={Ordinary differential equations},
         edition={fourth edition},
 PUBLISHER = {John Wiley \& Sons, Inc., New York},
      YEAR = {1989},
     PAGES = {xii+399},
      ISBN = {0-471-86003-4}
}

      \bib{Bodmann}{article}{
        author={Bodmann, Bernhard G.},
        title={A lower bound for the Wehrl entropy of quantum spin with sharp
          high-spin asymptotics},
        journal={Comm. Math. Phys.},
        volume={250},
        date={2004},
        number={2},
        pages={287--300},
        issn={0010-3616},
      }
      \bib{Brevig}{article}{
        author={Brevig, Ole Fredrik},
        author={Ortega-Cerd\`a, Joaquim},
        author={Seip, Kristian},
        author={Zhao, Jing},
        title={Contractive inequalities for Hardy spaces},
        journal={Funct. Approx. Comment. Math.},
        volume={59},
        date={2018},
        number={1},
        pages={41--56},
        issn={0208-6573},
      }

      \bib{Carlen}{article}{
        author={Carlen, Eric A.},
        title={Some integral identities and inequalities for entire functions
          and
          their application to the coherent state transform},
        journal={J. Funct. Anal.},
        volume={97},
        date={1991},
        number={1},
        pages={231--249},
        issn={0022-1236},
      }

\bib{Fed}{book}{
    AUTHOR = {Federer, Herbert},
     TITLE = {Geometric measure theory},
    SERIES = {Die Grundlehren der mathematischen Wissenschaften, Band 153},
 PUBLISHER = {Springer-Verlag New York, Inc., New York},
      YEAR = {1969},
     PAGES = {xiv+676},
}

\bib{Frank}{article} {
    AUTHOR = {Frank, Rupert L.},
     TITLE = {Sharp inequalities for coherent states and their optimizers},
     journal = {arXiv:2210.14798},
}

 \bib{Hed_etal}{book}{
    AUTHOR = {Hedenmalm, Haakan},
    AUTHOR = {Korenblum, Boris},
    AUTHOR = {Zhu, Kehe},
     TITLE = {Theory of {B}ergman spaces},
    SERIES = {Graduate Texts in Mathematics},
    VOLUME = {199},
 PUBLISHER = {Springer-Verlag, New York},
      YEAR = {2000},
     PAGES = {x+286},
      ISBN = {0-387-98791-6},
}

\bib{Hirsch}{book}{
     author = {Hirsch, Morris W.},
     title = {Differential {T}opology},
     series = {Graduate Texts in Mathematics},
     volume = {33},
     publisher = {Springer-Verlag, New York},
     year = {1976},
     pages = {x+222},
     ISBN = {0-387-90148-0},
}

      \bib{Lieb}{article}{
        author={Lieb, Elliott H.},
        title={Proof of an entropy conjecture of Wehrl},
        journal={Comm. Math. Phys.},
        volume={62},
        date={1978},
        number={1},
        pages={35--41},
        issn={0010-3616},
      }

      \bib{LiebSolo3}{article}{
        author={Lieb, Elliott H.},
        author={Solovej, Jan Philip},
        title={Wehrl-type coherent state entropy inequalities for $\rm SU(1,1)$
          and its $AX+B$ subgroup},
        conference={
          title={Partial differential equations, spectral theory, and
            mathematical physics---the Ari Laptev anniversary volume},
        },
        book={
          series={EMS Ser. Congr. Rep.},
          publisher={EMS Press, Berlin},
        },
        date={2021},
        pages={301--314},
      }

      \bib{LiebSolo2}{article}{
        author={Lieb, Elliott H.},
        author={Solovej, Jan Philip},
        title={Proof of the Wehrl-type entropy conjecture for symmetric $SU(N)$
          coherent states},
        journal={Comm. Math. Phys.},
        volume={348},
        date={2016},
        number={2},
        pages={567--578},
        issn={0010-3616},
      }

      \bib{LiebSolo1}{article}{
        author={Lieb, Elliott H.},
        author={Solovej, Jan Philip},
        title={Proof of an entropy conjecture for Bloch coherent spin states and
          its generalizations},
        journal={Acta Math.},
        volume={212},
        date={2014},
        number={2},
        pages={379--398},
        issn={0001-5962},
      }

      \bib{Kulikov}{article}{
        author={Kulikov, Aleksei},
        title={Functionals with extrema at reproducing kernels.},
        journal={Geom. Funct. Anal.},
        date={2022},
        volume={32},
        pages={938--942},
      }

      \bib{NicTil1}{article}{
        author={Nicola, Fabio},
        author={Tilli, Paolo},
        title={The Faber–Krahn inequality for the short-time Fourier transform},
        journal={Invent. Math.},
        date={2022},
        volume={230},
        pages={1--30},
      }

      \bib{RamTil}{article}{
      author={Ramos, Jo{\~a}o P. G.},
      author={Tilli, Paolo},
        title = {A Faber-Krahn inequality for wavelet transforms},
        journal = {arXiv:2205.07998},
      }

\bib{Oss}{article}{
    AUTHOR = {Osserman, Robert},
     TITLE = {The isoperimetric inequality},
   JOURNAL = {Bull. Amer. Math. Soc.},
    VOLUME = {84},
      YEAR = {1978},
    NUMBER = {6},
     PAGES = {1182--1238},
      ISSN = {0002-9904},
}

      \bib{Wehrl}{article}{
        title = {On the relation between classical and quantum-mechanical
          entropy},
        journal = {Reports on Mathematical Physics},
        volume = {16},
        number = {3},
        pages = {353-358},
        year = {1979},
        issn = {0034-4877},
        author = {Wehrl, Alfred},
      }

      \bib{Zhu}{book}{
    AUTHOR = {Zhu, Kehe},
     TITLE = {Analysis on {F}ock spaces},
    SERIES = {Graduate Texts in Mathematics},
    VOLUME = {263},
 PUBLISHER = {Springer, New York},
      YEAR = {2012},
     PAGES = {x+344},
      ISBN = {978-1-4419-8800-3},
}

    \end{biblist}
  \end{bibdiv}
\end{document}